\theoremstyle{plain}
\newtheorem{theorem}{\bf Theorem}[section]
\newtheorem{proposition}[theorem]{\bf Proposition}
\newtheorem{lemma}[theorem]{\bf Lemma}
\theoremstyle{definition}
\newtheorem{examples}[theorem]{\bf Examples}
\newcommand{\N}{\mathbb N}
\newcommand{\Z}{\mathbb Z}
\newcommand{\R}{\mathbb R}
 \DeclareMathOperator{\ord}{ord}
 \DeclareMathOperator{\id}{id}
\DeclareMathOperator{\Ker}{Ker} 
\renewcommand{\time}{\negthinspace \times \negthinspace}
\newcommand{\DP}{\negthinspace : \negthinspace}
\newcommand{\red}{\text{\rm red}}
\newcommand{\norm}[1]{\ \parallel \negthinspace #1 \negthinspace \parallel \ }
\renewcommand{\t}{\, | \,}
\numberwithin{equation}{section}
\begin{document}

\title{The set of  distances in seminormal weakly Krull monoids}

\address{University of Graz, NAWI Graz, \\
Institute for Mathematics and Scientific Computing \\
Heinrichstra{\ss}e 36\\
8010 Graz, Austria}

\email{alfred.geroldinger@uni-graz.at, qinghai.zhong@uni-graz.at}

\author{Alfred Geroldinger  and Qinghai Zhong}

\thanks{This work was supported by
the Austrian Science Fund FWF, Project Numbers M1641-N26 and P26036-N26.}

\keywords{sets of lengths, sets of distances, weakly Krull monoids, seminormal domains, non-principal orders}

\subjclass[2010]{11R27, 13A05, 13F15, 13F45, 20M13}

\begin{abstract}
The set of distances  of a monoid or of a domain  is the set of all $d \in \N$ with the following property: there are irreducible elements $u_1, \ldots, u_k, v_1, \ldots, v_{k+d}$ such that $u_1 \cdot \ldots \cdot u_k = v_1 \cdot \ldots \cdot v_{k+d}$, but $u_1 \cdot \ldots \cdot u_k$ cannot be written as a product of $l$ irreducible
elements for any $l$ with $k < l < k+d$. We show that the set of distances is an interval for certain seminormal weakly Krull monoids which include seminormal orders in holomorphy rings of global fields.
\end{abstract}

\maketitle

\bigskip
\section{Introduction and Main Result}
\bigskip

Let $H$ be a $v$-noetherian monoid (for example, a noetherian domain). Then every non-unit of $H$ has a factorization as a finite product of atoms (irreducible elements), and all these factorizations are unique (i.e., $H$ is factorial) if and only if $H$ is a Krull monoid with trivial $v$-class group. Otherwise, there are elements having factorizations which differ not only up to associates and up to the order of the factors. The occurring phenomena of  non-uniqueness are described by arithmetical invariants such as sets of lengths and sets of distances. We  recall some arithmetical concepts and then we formulate the main result of the present paper.

For a finite non-empty set $L = \{m_1, \ldots, m_k\}$ of positive integers with $m_1 < \ldots < m_k$, we denote by $\Delta (L) = \{m_i-m_{i-1} \mid i \in [2,k] \}$ the set of distances of $L$. Thus $\Delta (L)=\emptyset$ if and only if $|L|\le 1$. If a non-unit $a \in H$ has a factorization $a = u_1 \cdot \ldots \cdot u_k$ into atoms $u_1, \ldots, u_k$, then $k$ is called the length of the factorization, and the set $\mathsf L (a)$ of all possible factorization lengths $k$ is called the set of lengths of $a$. In $v$-noetherian monoids all sets of lengths are finite. If there is an element $a \in H$ with $|\mathsf L (a)|>1$, then the $n$-fold sumset $\mathsf L (a) + \ldots + \mathsf L (a)$ is contained in $\mathsf L (a^n)$ whence  $|\mathsf L (a^n)| > n$ for every $n \in \N$.  The set of distances $\Delta (H)$ (also called the delta set of $H$) is the union of all sets $\Delta (\mathsf L (a))$ over all non-units $a \in H$. Thus, by definition, $\Delta (H)=\emptyset$ if and only if $|\mathsf L (a)|=1$ for all non-units $a\in H$, and $\Delta (H)=\{d\}$ if and only if $\mathsf L (a)$ is an arithmetical progression with difference $d$ for all non-units $a \in H$.

The set of distances (together with associated invariants, such as the catenary degree) has found wide interest in the literature in settings ranging from numerical monoids to Mori domains (for a sample out of many see \cite{Ch-Sc-Sm08b,C-G-L09,B-C-R-S-S10,Ge-Gr-Sc-Sc10, Ge-Gr-Sc11a, Ch-Go-Pe14a, C-C-M-M-P14, Co-Ka15a, Ge-Zh16a, Pl-Sc16a}). In the present paper we focus on seminormal weakly Krull monoids and show  -- under mild natural assumptions -- that their sets of distances are intervals.

\medskip
\begin{theorem} \label{1.1}
Let $H$ be a seminormal $v$-noetherian weakly Krull monoid, $\widehat H$ its complete integral closure, and $\emptyset \ne \mathfrak f = (H \DP \widehat H)$ its conductor. Suppose that the localization $H_{\mathfrak p}$ is finitely primary  for each minimal prime ideal $\mathfrak{p}\in\mathfrak{X}(H)$, and  that every
class of the $v$-class group $G=\mathcal C_v (H)$  contains a  minimal prime ideal $\mathfrak p \in \mathfrak X (H)$ with $\mathfrak p \not\supset \mathfrak f$. Then $\Delta (H)=\emptyset$ or $\min \Delta (H)=1$. Moreover, we have
\begin{enumerate}
\item If $|G|=1$, then $\Delta (H) \subset \{1\}$ and if $G$ is infinite, then $\Delta (H) = \N$.

\item Suppose that $G$ is finite. If there is at most one $\mathfrak p \in \mathfrak X (H)$ such that $|\{ \mathfrak P \in \mathfrak X (\widehat H) \mid \mathfrak P \cap H = \mathfrak p\}|>1$ or if $G$ is an elementary $2$-group, then $\Delta (H)$ is a finite interval.
\end{enumerate}
\end{theorem}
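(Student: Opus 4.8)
The plan is to reduce the whole statement to a combinatorial question about a monoid of zero-sum sequences carrying attached primary parts, and then analyse that object. First recall the local structure: since $H_{\mathfrak p}$ is finitely primary and $H$ is seminormal, for each $\mathfrak p\in\mathfrak X(H)$ with $\mathfrak p\supseteq\mathfrak f$ the reduced monoid associated with $H_{\mathfrak p}$ is isomorphic to $\{0\}\cup\N^{s_{\mathfrak p}}$, where $s_{\mathfrak p}=|\{\mathfrak P\in\mathfrak X(\widehat H)\mid\mathfrak P\cap H=\mathfrak p\}|$, together with the homomorphism into $G$ that records the classes of the $s_{\mathfrak p}$ primes of $\widehat H$ lying over $\mathfrak p$. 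Invoking the known transfer theory for seminormal $v$-noetherian weakly Krull monoids, $H$ admits a transfer homomorphism $\theta$ onto a $T$-block monoid $\mathcal B=\mathcal B\bigl(G,(T_{\mathfrak p})_{\mathfrak p\supseteq\mathfrak f}\bigr)$ with $T_{\mathfrak p}$ these seminormal primary pieces, so that $\mathsf L_H(a)=\mathsf L_{\mathcal B}(\theta(a))$ and $\Delta(H)=\Delta(\mathcal B)$. The hypothesis that every class of $G$ contains some $\mathfrak p\not\supseteq\mathfrak f$ guarantees that $\mathcal B$ contains (arithmetically) the ordinary block monoid $\mathcal B(G)$ and that there are free factorial primes available for padding constructions.

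With this reduction the two extreme cases are quick. If $|G|=1$, then $\mathcal B$ is, up to the factorial contribution of a free prime (which exists by hypothesis), a finite product of the monoids $T_{\mathfrak p}\cong\{0\}\cup\N^{s_{\mathfrak p}}$; each such seminormal finitely primary monoid has all of its sets of lengths equal to finite intervals (and $\Delta\subseteq\{1\}$). Since the atoms of a finite product of reduced monoids are the atoms of the individual factors, every set of lengths in $\mathcal B$ is a sumset of finite intervals, hence a finite interval, so $\Delta(H)=\Delta(\mathcal B)\subseteq\{1\}$. If $G$ is infinite, then $\Delta(\mathcal B(G))=\N$ (Kainrath), and since $\mathcal B(G)$ sits inside $\mathcal B$ we get $\Delta(H)=\N$. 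In particular $\min\Delta(H)=1$ in both regimes, and this also settles part (1).

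Now assume $G$ finite and $\Delta(H)\ne\emptyset$. For $\min\Delta(H)=1$: if some $s_{\mathfrak p}\ge2$ the corresponding primary factor already contributes $1\in\Delta(H)$; otherwise all $s_{\mathfrak p}=1$, every $H_{\mathfrak p}$ with $\mathfrak p\supseteq\mathfrak f$ is a discrete valuation monoid, hence $H$ is a Krull monoid with class group $G$ in which every class contains a prime divisor, and $\min\Delta(H)=1$ by the known bound for such Krull monoids. For the intervality under hypothesis (2)(a): if there is no bad prime, $H$ is Krull as above and $\Delta(H)$ is a finite interval by the corresponding theorem for Krull monoids with finite class group. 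If there is exactly one bad prime $\mathfrak q$, with $s:=s_{\mathfrak q}\ge2$, then $\mathcal B$ differs from $\mathcal B(G)$ only by the single primary factor $T_{\mathfrak q}$; I would describe the atoms of $\mathcal B$ (zero-sum sequences over $G$, possibly completed by a $\N^s$-component) and show $\Delta(\mathcal B)=[1,N]$, where $N$ is the maximum of $\max\Delta(\mathcal B(G))$ and the largest distance realized by factorizations using $T_{\mathfrak q}$. The inclusion $\Delta(\mathcal B)\subseteq[1,N]$ is a routine length estimate; the reverse — realizing every $d$ in this range — is the main obstacle. I expect to handle it by explicit families of relations: the distances up to $\max\Delta(\mathcal B(G))$ come from $\mathcal B(G)$, and the remaining ones up to $N$ are produced by trading powers of $(1,\dots,1)$ in the $\N^s$-component against unions of $T_{\mathfrak q}$-atoms having a coordinate equal to $1$, padded with the free generators supplied by the hypothesis so that the two factorization lengths differ by exactly the prescribed amount; one then checks these families leave no gap, which is exactly where the fact that the seminormal rank-$s$ primary monoid has interval-shaped sets of lengths is used.

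Finally, under hypothesis (2)(b), $G$ is an elementary $2$-group, so $\mathcal B(G)=\mathcal B(C_2^{\,r})$ and $\Delta(\mathcal B(G))$ is the interval $[1,r-1]$ (empty if $r\le1$), where $r$ is the rank; moreover, since every nonzero element of $G$ has order $2$, the maps $T_{\mathfrak p}\to G$ and the relations in $\mathcal B$ become rigid, as any zero-sum relation may be altered coordinate-wise modulo $2$. I would use this to show that the distances contributed by all the primary factors $T_{\mathfrak p}$ together with $\Delta(\mathcal B(C_2^{\,r}))$ again fill an interval $[1,N]$: the point is that in an elementary $2$-group the mixed atoms (a length-two minimal zero-sum sequence $g\cdot g$ completed by primary components) combine additively enough that the individually interval-shaped contributions from each $T_{\mathfrak p}$ and from $\mathcal B(G)$ overlap and so stay connected. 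The crux throughout part (2) is precisely this gap-freeness of the union of the global zero-sum distances with the local primary distances; the two listed hypotheses are exactly the two situations in which one can prove that the union remains an interval — one by isolating a single primary factor, the other by exploiting the arithmetic rigidity of elementary $2$-groups.
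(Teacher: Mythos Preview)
Your reduction to the $T$-block monoid $B=\mathcal B(G,T,\iota)$ and the treatment of the extreme cases $|G|=1$ and $G$ infinite are essentially the paper's. The genuine gaps lie in the argument for $\min\Delta(H)=1$ and, more seriously, in part~(2).

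For $\min\Delta(H)=1$: your dichotomy is faulty. A seminormal finitely primary monoid of rank $1$ is half-factorial (Lemma~\ref{2.1}.2), but it is \emph{not} a discrete valuation monoid unless $D^\times=\widehat D^\times$; since the $D_i$ arising from primes above $\mathfrak f$ are non-factorial by construction, the conclusion ``$H$ is Krull'' is false. Conversely, the claim that a single $s_{\mathfrak p}\ge2$ already forces $1\in\Delta(B)$ is not obvious: $D_i$ is not divisor-closed in $B$, only $D_i\cap B=\Ker(\iota|_{D_i})$ is, and Example~\ref{3.4}.3(a) shows that such an intersection can have $\min\Delta=n$ for any $n$. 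The paper does not argue this way at all; it deduces $\min\Delta=1$ for $|G|=2$ \emph{from} the elementary $2$-group case of part~(2), and for $|G|\ge3$ from $1\in\Delta(G)$.

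For part~(2): your plan is to compute $N=\max\Delta(B)$ and then exhibit, for each $d\in[1,N]$, an element whose set of lengths has a gap of exactly $d$. But $N$ is not known --- the paper only determines it in the special case $|G|=2$ (Theorem~\ref{3.3}), and that alone takes several pages. The paper's actual mechanism is a \emph{descent} on a suitably defined norm $\|\cdot\|$ on $B$ (Proposition~\ref{3.2}): if $A\in B$ satisfies $\max\Delta(\mathsf L(A))\ge\max\Delta(G)+2$, one constructs $A'$ with $\|A'\|<\|A\|$ and $\max\Delta(\mathsf L(A'))\ge\max\Delta(\mathsf L(A))-1$. Iterating from an element realizing $\max\Delta(B)$ produces, for every $d\in[\max\Delta(G)+1,\max\Delta(B)]$, an element with $\max\Delta(\mathsf L(\cdot))=d$; combined with $\Delta(G)=[1,\max\Delta(G)]$ this gives the interval. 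The construction of $A'$ is where the hypotheses (at most one rank $>1$, or $G$ elementary $2$-group) are used, through a detailed case analysis (Cases~1--5 of Proposition~\ref{3.2}) that replaces a factor $g_1g_2$ by $g_1+g_2$, or a factor $q_{i,1}\cdots q_{i,s_i}$ by its class, or exploits $\exp(G)=2$. Your sketch of ``trading powers of $(1,\dots,1)$ against unions of atoms, padded with free generators'' gestures at some of these moves but supplies neither the norm, nor the induction, nor the case analysis that makes the descent work; in particular it does not explain why no gap can occur between $\max\Delta(G)$ and $\max\Delta(B)$ without knowing the latter.
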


Seminormal orders in algebraic number fields satisfy all  assumptions of Theorem \ref{1.1}, and more examples will be given in Section \ref{2}. Seminormal orders  have been studied by Dobbs and Fontana in \cite{Do-Fo87}, where they provide, among others,  a full characterization of seminormal orders in quadratic number fields.
Note that  for a non-principal order $R$, which is not  seminormal and   whose Picard group has at most two elements, it is open whether or not we have $1 \in \Delta (R)$, let alone whether or not $\Delta (R)$ is an interval (e.g., \cite{Ph12b}).

Every Krull monoid is seminormal $v$-noetherian weakly Krull and all localizations are discrete valuation monoids and hence finitely primary. For Krull monoids having a minimal prime ideal in each class (whence in particular for principal orders in algebraic number fields) it is well-known that the set of distances is an interval (\cite{Ge-Yu12b}), and it is the goal of the present paper to generalize this result from the Krull to the weakly Krull case. Note, that  even in the case of Krull monoids, the assumption that every class contains a minimal prime ideal is essential to obtain that the set of distances is an interval (see Examples \ref{3.4}.1).

Suppose that $H$ is seminormal $v$-noetherian weakly Krull monoid with finite $v$-class group, nontrivial conductor, and  with all localizations being finitely primary.
It is well-known that the set of distances is finite, and this result holds without the seminormality assumption. However,  seminormality is crucial for the set of distances being an interval (even in the local case, sets of distances may fail to be intervals without assuming seminormality, see Examples \ref{3.4}).
There is an ideal-theoretic characterization  when the set of distances is empty  (\cite[Theorem 6.2]{Ge-Ka-Re15a}; a necessary condition  is that the $v$-class group has at most two elements). However, we did not want to include this characterization into the formulation of Theorem \ref{1.1}.

Suppose that the $v$-class group of $H$ is finite but not trivial and consider the assumption in Statement 2 of Theorem \ref{1.1}.
For $k \in \N$, let $\mathcal U_k (H)$ denote the set of all $\ell \in \N$ for which there is an equation of the form $u_1 \cdot \ldots \cdot u_k = v_1 \cdot \ldots \cdot v_{\ell}$ where $u_1, \ldots, u_k, v_1, \ldots, v_{\ell}$ are atoms (equivalently, $\mathcal U_k (H)$ is the union of all sets of lengths containing $k$). The map $\pi \colon \mathfrak X (\widehat H) \to \mathfrak X (H)$, defined by $\pi ( \mathfrak P) = \mathfrak P \cap H$ for all $\mathfrak P \in \mathfrak X (\widehat H)$, is surjective, and it is well-known that the unions $\mathcal U_k (H)$ are finite for all $k \in \N$ if and only if  $\pi$ is bijective. If the unions are finite and $H$ is seminormal, then the unions are finite intervals (\cite[Theorem 5.8]{Ge-Ka-Re15a}).  This shows that the first assumption in Statement 2 of Theorem \ref{1.1} is a natural one (our machinery is strong enough so that we can handle the slightly more general situation where there is at most one prime $\mathfrak p \in \mathfrak X (H)$ which is not inert in $\widehat H$).

Suppose that this assumption on the map $\pi$ does not hold. We settle the case where $G$ is an elementary $2$-group, and this allows us to show that $\Delta (H)=\emptyset$ or $\min \Delta (H)=1$ without any additional assumptions. The proof for elementary $2$-groups uses the fact that we know the maximum of the set of distances for Krull monoids whose class groups are elementary $2$-groups (this maximum is known only in very special cases; see Proposition \ref{2.4}).  We provide a detailed analysis of the case $|G|=2$ and determine the maximum of the set of distances (Theorem \ref{3.3}). In case of Krull monoids it is straightforward that the set of distances is empty if and only if $|G| \le 2$, and  Theorem \ref{3.3} reveals the complexity of the situation in the weakly Krull case.

The present paper is based on  ideal-theoretic results for $v$-noetherian weakly Krull monoids, recently established in \cite{Ge-Ka-Re15a}. They allow to study sets of distances in a special class of weakly Krull monoids which are easier to handle. Furthermore, we use that sets of distances in Krull monoids are intervals (\cite{Ge-Yu12b}). Our machinery will be put together in Section \ref{2}. The proof of Theorems \ref{1.1} and  \ref{3.3} will be given in Section \ref{3}, and we end with examples demonstrating the necessity of the various assumptions of Theorem \ref{1.1}.

\medskip
\section{Background in (weakly) Krull monoids} \label{2}
\medskip

We denote by $\N$ the set of positive integers, and for real numbers $a, b \in \R$ we denote by $[a, b] = \{ x \in \Z \mid a \le x \le b \}$ the discrete interval between $a $ and $b$. For subsets $A, B \subset \Z$, let $A + B = \{a+b \mid a \in A, b \in B\}$ be their sumset, and let $\Delta (A) = \{ d \in \N \mid d = l-k \ \text{for some} \ k, l \in L \ \text{with} \ [k, l] \cap A = \{k, l\} \}$ be the set of distances of $L$.

By a monoid, we mean a commutative cancellative semigroup with unit element. Let $H$ be a monoid. We denote by $\mathcal A (H)$ the set of atoms of $H$, by $H^{\times}$ the group of invertible elements of $H$, by $\mathsf q (H)$ the quotient group of $H$, and by $H_{\red} =H/H^{\times}$ the associated reduced monoid of $H$. For a set $\mathcal P$, we denote by $\mathcal F (\mathcal P)$ the \ {\it free
abelian monoid} \ with basis $\mathcal P$. Then every $a \in \mathcal F (\mathcal P)$ has a unique representation of the form
\[
a = \prod_{p \in \mathcal P} p^{\mathsf v_p(a) } \quad \text{with} \quad
\mathsf v_p(a) \in \N_0 \ \text{ and } \ \mathsf v_p(a) = 0 \ \text{
for almost all } \ p \in \mathcal P \,,
\]
and we  call $|a|_{\mathcal F (\mathcal P)} = |a|= \sum_{p \in \mathcal P}\mathsf v_p(a)$ the \emph{length} of $a$.
The  monoid  $\mathsf Z (H) = \mathcal F \bigl(
\mathcal A(H_\red)\bigr)$  is called the  {\it factorization
monoid}  of $H$, and  the unique homomorphism
\[
\pi \colon \mathsf Z (H) \to H_{\red} \quad \text{satisfying} \quad
\pi (u) = u \quad \text{for each} \quad u \in \mathcal A(H_\red)
\]
is  the  {\it factorization homomorphism}  of $H$. For $a
\in H$,
\[
\begin{aligned}
\mathsf Z_H (a) = \mathsf Z (a)  & = \pi^{-1} (aH^\times) \subset
\mathsf Z (H) \quad
\text{is the \ {\it set of factorizations} \ of \ $a$} \,,  \quad \text{and}
\\
\mathsf L_H (a) = \mathsf L (a) & = \bigl\{ |z| \, \bigm| \, z \in
\mathsf Z (a) \bigr\} \subset \N_0 \quad \text{is the \ {\it set of
lengths} \ of $a$}  \,.
\end{aligned}
\]
Thus $\mathsf L (a) = \{0\}$ if and only if $a \in H^{\times}$, and $\mathsf L (a) = \{1\}$ if and only if $a \in \mathcal A (H)$. The monoid $H$ is said to be {\it atomic} if $\mathsf Z (a) \ne \emptyset$ for every $a \in H$ (equivalently, every non-unit can be written as a finite product of atoms). If $H$ is $v$-noetherian (i.e., the ascending chain condition on divisorial ideals holds), then $H$ is atomic and all sets of lengths are finite and non-empty. Next, let
\[
\Delta (H) = \bigcup_{a \in H } \Delta \big(  \mathsf L (a) \big)
\]
denote the {\it set of distances} of $H$. Clearly, $\Delta (H)=\emptyset$ if and only if $|\mathsf L (a)|=1$ for each $a \in H$, and in this case $H$ is said to be half-factorial (for recent work on half-factorial domains see \cite{Co05a, Pl-Sc05a, Pl-Sc05b, Ma-Ok16a}).
If $H$ is not half-factorial, then $\min \Delta (H)= \gcd \Delta (H)$ (\cite[Proposition 1.4.4]{Ge-HK06a}). Thus, if there is an $m \in \N$ such that $m, m+1 \in \Delta (H)$, then $\min \Delta (H)=1$.

Let $z, z' \in \mathsf Z (H)$, say
\[
z= u_1 \cdot \ldots \cdot u_lv_1 \cdot \ldots \cdot v_m \quad \text{and} \quad z' = u_1 \cdot \ldots \cdot u_lw_1 \cdot \ldots \cdot w_n \,,
\]
where $l,m,n \in \N_0$ and $u_1, \ldots, u_l, v_1, \ldots, v_m,w_1, \ldots, w_n \in \mathcal A (H_{\red})$ with $\{v_1, \ldots, v_m\} \cap \{w_1, \ldots, w_n\} = \emptyset$. We call $\mathsf d (z,z')=\max \{m,n\} \in \N_0$ the {\it distance} between $z$ and $z'$. For every $N \in \N_0 \cup \{\infty\}$, an $N$-chain of factorizations of $a$ from $z$ to $z'$ is a finite sequence $(z_i)_{i \in [0,k]}$ of factorizations $z_i \in \mathsf Z (a)$ such that $z=z_0$, $z'=z_k$, and $\mathsf d (z_{i-1}, z_i) \le N$ for every $i \in [1,k]$.  For an element $a \in H$, its {\it catenary degree} $\mathsf c (a)$ is defined as the smallest $N \in \N_0 \cup \{\infty\}$ such that for any two factorizations $z, z' \in \mathsf Z (a)$ there is an $N$-chain of factorizations of $a$ from $z$ to $z'$. Then
\[
\mathsf c (H) = \sup \{ \mathsf c (a) \mid a \in H \} \in \N_0 \cup \{\infty\}
\]
denotes the {\it catenary degree} of $H$. If $a \in H$ has at least two distinct factorizations, then $2 + \sup \Delta \big( \mathsf L (a) \big) \le \mathsf c (a)$. The monoid $H$ is factorial if and only if it is atomic and $\mathsf c (H)=0$.  If $H$ is atomic but not factorial, then $2 + \sup \Delta (H) \le \mathsf c (H)$ (\cite[Theorem 1.6.3]{Ge-HK06a}).

Let $D$ be a monoid such that $H \subset D$ is a submonoid. We say that $H \subset D$ is
\begin{itemize}
\item {\it saturated} \ if $H = \mathsf q (H) \cap D$,
\item {\it divisor-closed} \ if $a \in H$, $\alpha \in D$, and $\alpha \t a$ imply that $\alpha \in H$,
\item {\it cofinal} \ if for every $\alpha \in D$ there is an $a \in H$ such that $\alpha \t a$, \ and
\end{itemize}
the factor group $\mathsf q (D)/\mathsf q (H)$ is called the {\it class group} of $H \subset D$.

Let $\mathfrak X (H)$ denote the set of all minimal non-empty prime $s$-ideals of $H$, and for subsets $A, B \subset \mathsf q (H)$, we set
$(A \DP B) = \{ x \in \mathsf q (H) \mid x B \subset A \}$.  We denote by  $\mathcal I_v^* (H)$  the monoid of $v$-invertible $v$-ideals (with $v$-multiplication) and by $\mathcal F_v (H)^{\times} = \mathsf q \big( \mathcal I_v^* (H) \big)$  its quotient group of fractional $v$-invertible $v$-ideals. The monoid of principal ideals $\mathcal H = \{aH \mid a \in H \}$ is a cofinal saturated submonoid of $\mathcal I_v^* (H)$, and the class group of $\mathcal H  \subset \mathcal I_v^* (H)$ is the $v$-class group $\mathcal C_v (H) = \mathcal F_v (H)^{\times}/ \mathsf q (\mathcal H)$ of $H$.  We denote by
\begin{itemize}
\item $H' = \{ x \in \mathsf q (H) \mid \ \text{there exists some} \ N \in
\N \
             \text{such that} \ x^n \in H \ \text{for all} \ n \ge N \}
$ the {\it seminormal closure} (also called the {\it
seminormalization}) of $H$, and by

\item $\widehat H = \{ x \in \mathsf q (H) \mid \ \text{there exists
             some } \ c \in H \ \text{such that} \ cx^n \in H \ \text{for all}
             \ n \in \N \}$ the {\it complete integral closure} of
             $H$ \,.
\end{itemize}
We say that $H$ is seminormal (completely integrally closed resp.) if $H = H'$ ($H = \widehat H$ resp.), and $(H \DP \widehat H)$  is called the conductor of $H$.
The localization $D_{\mathfrak p}$ of any monoid $D$ is a primary monoid for each $\mathfrak p \in \mathfrak X (D)$. We will mainly be concerned with a special class of primary monoids. A monoid $D$ is
called  {\it finitely primary} if there exist $s,\, \alpha \in \N$
such that $D$ is a submonoid of a factorial monoid $F= F^\times
\time [q_1,\ldots,q_s]$ with $s$ pairwise non-associated prime
elements $q_1, \ldots, q_s$ satisfying
\begin{equation} \label{finitelyprimary}
D \setminus D^\times \subset q_1 \cdot \ldots \cdot q_sF \quad
\text{and} \quad (q_1 \cdot \ldots \cdot q_s)^\alpha F \subset D
\,.
\end{equation}
If this holds, then $D$ is primary, $F = \widehat D$, $|\mathfrak X ( \widehat D)|=s$ is called the rank of $D$, and $D$ is seminormal if and only if
\[
D = q_1 \cdot \ldots \cdot q_s F \cup D^{\times} \,.
\]

\medskip
\begin{lemma} \label{2.1}
Let $D \subset F = F^\times \time [q_1,\ldots,q_s]$ be a seminormal
finitely primary monoid of rank $s$. Then
\begin{enumerate}
\item $\mathcal A (D) = \{ \epsilon q_1^{k_1} \cdot \ldots \cdot q_s^{k_s} \mid  \epsilon \in F^{\times} \  \text{\rm and} \
      \min \{k_1, \ldots, k_s \} = 1 \}$.

\smallskip
\item If $s=1$, then  $\mathsf c (D) \le 2$ and $D$ is half-factorial.

\smallskip
\item If $s \ge 2$, then $\min \mathsf L (a) = 2$ for all $a \in D \setminus ( D^{\times} \cup \mathcal A (D) )$. In particular,  $\mathsf c (D) = 3$.

\end{enumerate}
\end{lemma}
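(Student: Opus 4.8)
The plan is to leverage the completely explicit description of $D$ afforded by seminormality. Since $D = q_1 \cdot \ldots \cdot q_s F \cup D^{\times}$ and no element of $q_1 \cdot \ldots \cdot q_s F$ can lie in $D^{\times}$ (it is a non-unit of $F$), one gets $D \setminus D^{\times} = q_1 \cdot \ldots \cdot q_s F = \{ \epsilon q_1^{k_1} \cdot \ldots \cdot q_s^{k_s} \mid \epsilon \in F^{\times},\ k_1, \ldots, k_s \in \N \}$; in particular $D$ is atomic. For \textbf{(1)} I would argue as follows: if $a = \epsilon q_1^{k_1} \cdot \ldots \cdot q_s^{k_s}$ with $\min\{k_1, \ldots, k_s\} = 1$, say $k_j = 1$, then in any factorization $a = bc$ into non-units $b, c \in D$ both $b$ and $c$ have $q_j$-exponent $\geq 1$, forcing $k_j \geq 2$ — a contradiction, so $a \in \mathcal A(D)$. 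Conversely, if $a \in \mathcal A(D)$ then $a = \epsilon q_1^{k_1} \cdot \ldots \cdot q_s^{k_s}$ with all $k_i \geq 1$, and if all $k_i \geq 2$ one could write $a = (q_1 \cdot \ldots \cdot q_s)(\epsilon q_1^{k_1 - 1} \cdot \ldots \cdot q_s^{k_s - 1})$ as a product of two non-units, contradicting irreducibility; hence $\min\{k_1, \ldots, k_s\} = 1$.

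For \textbf{(2)}, let $s = 1$. By (1) every atom of $D$ has $q_1$-exponent $1$, so every factorization of $a = \epsilon q_1^k$ has length exactly $k$; thus $|\mathsf L(a)| = 1$ and $D$ is half-factorial. For the catenary degree I would observe that replacing a pair of atoms $(\epsilon_1 q_1)(\epsilon_2 q_1)$ occurring in a factorization by $(\epsilon_1 \epsilon_2 q_1)(q_1)$ yields a new factorization at distance $\leq 2$; iterating, every factorization of $a$ can be transformed through $\leq 2$-steps into the canonical one $(\epsilon q_1)(q_1)^{k-1}$, so $\mathsf c(a) \leq 2$ and $\mathsf c(D) \leq 2$.

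For \textbf{(3)}, let $a = \epsilon q_1^{k_1} \cdot \ldots \cdot q_s^{k_s} \in D \setminus (D^{\times} \cup \mathcal A(D))$; by (1) all $k_i \geq 1$ and $m = \min\{k_1, \ldots, k_s\} \geq 2$, and since $a$ is neither a unit nor an atom, $\min \mathsf L(a) \geq 2$. To prove $\min \mathsf L(a) = 2$ I would exhibit atoms $u, v$ with $a = uv$ by splitting the exponent vector as $(k_1, \ldots, k_s) = (a_1, \ldots, a_s) + (b_1, \ldots, b_s)$ with all $a_i, b_i \geq 1$ (so that $u = q_1^{a_1} \cdot \ldots \cdot q_s^{a_s}$ and $v = \epsilon q_1^{b_1} \cdot \ldots \cdot q_s^{b_s}$ are non-units, hence in $D$) and with $\min\{a_i\} = \min\{b_i\} = 1$ (so that $u, v \in \mathcal A(D)$ by (1)). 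If $m = 2$ one takes $a_i = 1$ and $b_i = k_i - 1$ for all $i$; if $m \geq 3$, say $k_1 = m$, one takes $a_1 = 1$, $a_s = k_s - 1$, $a_i = 1$ for $1 < i < s$, and $b_i = k_i - a_i$, and checks using $s \geq 2$ that all constraints are met.

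It then remains to deduce $\mathsf c(D) = 3$. For $\mathsf c(D) \leq 3$: a product $u_1 u_2 u_3$ of three atoms is a non-unit which is not an atom, so by the assertion just proved it admits a factorization $v_1 v_2$ of length $2$, and $\mathsf d(u_1 u_2 u_3, v_1 v_2) \leq 3$; repeatedly replacing such triples reduces any factorization of a non-unit non-atom $a$ to one of length $\min \mathsf L(a) = 2$, while any two factorizations of $a$ of length $2$ are at distance $\leq 2$ (distinct ones share no atom), hence $\mathsf c(a) \leq 3$ in all cases. For $\mathsf c(D) \geq 3$ take $a = (q_1 \cdot \ldots \cdot q_s)^3$: every atom has $q_1$-exponent $\geq 1$, so $\mathsf L(a) \subset \{2, 3\}$, and both $2$ (by the above) and $3$ (the obvious factorization) lie in $\mathsf L(a)$, so $\mathsf L(a) = \{2,3\}$ and $3 = 2 + \sup \Delta(\mathsf L(a)) \leq \mathsf c(a) \leq \mathsf c(D)$. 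The main obstacle is precisely the heart of (3) — that \emph{every} non-unit non-atom factors into exactly two atoms however large the $k_i$ are — which is what makes the slightly delicate exponent-splitting necessary; granting it, the half-factoriality and catenary bounds in (2) and the value $\mathsf c(D) = 3$ follow from the elementary atom-merging moves together with the inequality $2 + \sup \Delta(\mathsf L(a)) \leq \mathsf c(a)$.
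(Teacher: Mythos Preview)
Your argument is correct in all three parts. The characterization $D \setminus D^{\times} = q_1 \cdots q_s F$ coming from seminormality is exactly the right starting point, and your exponent-splitting in part (3) --- peeling off an atom with a single $1$ at an index where $k_i$ is minimal and a single $1$ at some other index on the complementary factor --- cleanly forces both pieces to be atoms; the deduction of $\mathsf c(D)=3$ from $\min \mathsf L(a)=2$ via triple-to-pair replacements and the witness $a=(q_1\cdots q_s)^3$ is standard and valid.

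There is nothing to compare against here: the paper does not prove this lemma but merely cites \cite[Lemma 3.6]{Ge-Ka-Re15a}. Your self-contained proof is precisely the kind of elementary argument one expects that reference to contain, so in effect you have reproduced the cited result rather than taken a different route.
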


\begin{proof}
See \cite[Lemma 3.6]{Ge-Ka-Re15a}.
\end{proof}

A monoid $H$ is called a {\it weakly Krull monoid} (\cite[Corollary 22.5]{HK98}) if
\[
H = \bigcap_{{\mathfrak p} \in \mathfrak X (H)} H_{\mathfrak p}  \quad \text{and} \quad \{{\mathfrak p} \in \mathfrak X (H) \mid a \in {\mathfrak p}\} \quad \text{is finite for all} \ a \in H \,.
\]
A domain $R$ is weakly Krull if and only if its multiplicative monoid $R^{\bullet}=R\setminus \{0\}$ of nonzero elements is weakly Krull. Weakly Krull domains were introduced by Anderson, Anderson, Mott, and Zafrullah \cite{An-An-Za92b, An-Mo-Za92}, and a divisor theoretic characterization was first given by Halter-Koch \cite{HK95a}. For seminormal $v$-noetherian domains (i.e., seminormal Mori domains) we refer to the survey by Barucci \cite{Ba94}. The ideal theory of (general) weakly Krull monoids is presented in \cite[Chapters 21 -- 24]{HK98}, and for seminormal $v$-noetherian weakly Krull monoids we refer to \cite[Section 5]{Ge-Ka-Re15a}. A monoid $H$ is said to be {\it Krull} if it is weakly Krull and $H_{\mathfrak p}$ is a discrete valuation monoid for all $\mathfrak p \in \mathfrak X (H)$ (equivalently, $H$ is $v$-noetherian and completely integrally closed).

Let $H$ be a weakly Krull monoid. Then $H$ is $v$-noetherian (seminormal resp.) if and only if all localizations $H_{\mathfrak p}$ are $v$-noetherian (seminormal resp.) for each $\mathfrak p \in \mathfrak X (H)$.
Let $H$ be a seminormal $v$-noetherian weakly Krull monoid with $\emptyset \ne \mathfrak f = (H \DP \widehat H) \subsetneq H$. Then $\widehat H$ is Krull and for each $\mathfrak p \in \mathfrak X (H)$, $H_{\mathfrak p}$ is seminormal $v$-noetherian primary, and if
    $H$ is the multiplicative monoid of nonzero elements of a domain, then  $H_{\mathfrak p}$ is even finitely primary.

Noetherian domains are weakly Krull if and only if every prime ideal of depth one has height one, which holds in particular for all one-dimensional noetherian domains. Let  $R$  be a one-dimensional noetherian
domain such that its integral closure $\overline R$  is a finitely generated $R$-module.
Then the integral closure coincides with the complete integral closure, the conductor $\mathfrak f = (R \DP \overline R)$ is nonzero, and the $v$-class group is the usual Picard group.  If $R$ is an order in an algebraic number field or an order in a holomorphy ring of an algebraic function field, then the $v$-class group is finite and every class  contains infinitely many prime ideals. We refer to \cite{Ha04c,An-Ch-Pa06a, Ch09a, Li12a} for more on weakly Krull domains and to the  extended list of further examples  in \cite[Examples 5.7]{Ge-Ka-Re15a}.

\medskip
We continue with weakly Krull monoids of a combinatorial flavor which are used to model general weakly Krull monoids.
Let $G$ be an additive abelian group, $G_0 \subset G$ a
subset, $T$ a reduced monoid and $\iota \colon T \to G$ a homomorphism. Let
$\sigma \colon \mathcal F(G_0) \to G$ be the unique homomorphism
satisfying $\sigma (g) = g$ for all $g \in G_0$. Then
\[
B = \mathcal B (G_0,T,\iota) = \{S\,t \in \mathcal F(G_0) \time T\,\mid\, \sigma (S) + \iota(t) =
0\,\} \subset \mathcal F (G_0) \time T = F
\]
is called the  \textit{$T$-block monoid over  $G_0$  defined by $\iota$}\,.

\medskip
\begin{proposition} \label{2.2}
Let $D = \mathcal F (\mathcal P) \time T$ be a reduced atomic monoid, where $\mathcal P \subset D$ a set of primes
and $T \subset D$ is a  submonoid, and let $H \subset D$ be an atomic saturated submonoid with
class group $G = \mathsf q (D)/\mathsf q (H)$, and  $ G_{\mathcal P} = \{[p] \mid p \in \mathcal P\} \subset G$ the set of classes containing primes.
Let $\iota \colon T \to G$ be defined by $\iota (t) = [t]$, \ $F= \mathcal F (G_{\mathcal P}) \time T$, \ $B =
\mathcal B(G_{\mathcal P},T, \iota) \subset F$, and let $\widetilde{\boldsymbol \beta} \colon D \to F$ be the
unique homomorphism satisfying $\widetilde{\boldsymbol \beta} (p) = [p]$ for all $p \in \mathcal P$ and
$\widetilde{\boldsymbol \beta} \t T = \text{\rm id}_T$.

\begin{enumerate}
\item The restriction $\boldsymbol \beta = \widetilde{\boldsymbol
\beta} \t H \colon H \to B$ \ is a transfer homomorphism satisfying
$\mathsf c (H, \boldsymbol \beta) \le 2$. In particular, we have $\Delta (H) = \Delta (B)$ and $\mathsf c (H) = \mathsf c (B)$ (provided that $H$ is not factorial).

\smallskip
\item If $H \subset D$ is cofinal, then $B \subset F$ is cofinal, and there is an isomorphism $\overline \psi \colon \mathsf q (F)/\mathsf q (B) \to G$,
by which we will identify these groups.

\smallskip
\item If   $T=D_1 \times \ldots \times D_n$, $D_1, \ldots, D_n$ are seminormal finitely primary, and $G$ is a torsion group, then $H$ and $B$ are seminormal $v$-noetherian weakly Krull monoids with nontrivial conductors.
\end{enumerate}
\end{proposition}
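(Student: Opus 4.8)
The plan is to prove the three statements in order; statement (1) carries the technical weight, while (2) and (3) are shorter verifications (the latter leaning on \cite{Ge-Ka-Re15a}).

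\emph{Statement (1).}
First I would check that $\widetilde{\boldsymbol\beta}(H) \subseteq B$: writing $a \in H$ as $a = \bigl(\prod_{p \in \mathcal P} p^{\mathsf v_p(a)}\bigr) t$ with $t \in T$, one has $\widetilde{\boldsymbol\beta}(a) = \bigl(\prod_p [p]^{\mathsf v_p(a)}\bigr) t \in F$ and $\sigma\bigl(\prod_p [p]^{\mathsf v_p(a)}\bigr) + \iota(t) = \sum_p \mathsf v_p(a)\,[p] + [t] = [a] = 0$ in $G$, since $a \in \mathsf q(H)$; hence $\widetilde{\boldsymbol\beta}(a) \in B$. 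Next I would verify that $\boldsymbol\beta$ is a transfer homomorphism. Surjectivity onto $B$: given $St \in B$ with $S = g_1 \cdots g_n$, pick $p_i \in \mathcal P$ with $[p_i] = g_i$; then $a = p_1 \cdots p_n t \in D$ satisfies $[a] = \sigma(S) + \iota(t) = 0$, so $a \in \mathsf q(H) \cap D = H$ by saturation and $\boldsymbol\beta(a) = St$. Since all monoids are reduced, $\widetilde{\boldsymbol\beta}^{-1}(\{1\}) = \{1\}$, which settles the rest of (T1). For (T2), if $\widetilde{\boldsymbol\beta}(a) = (S_bt_b)(S_ct_c)$ with both factors in $B$, split the primes of $\mathcal P$ dividing $a$ according to the decomposition $S_b S_c$ of their multiset of classes to get $a = a'a''$ in $D$ with $\widetilde{\boldsymbol\beta}(a') = S_bt_b$, $\widetilde{\boldsymbol\beta}(a'') = S_ct_c$; since $S_bt_b, S_ct_c \in B$, one has $[a'] = [a''] = 0$, so $a', a'' \in H$. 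Thus $\boldsymbol\beta$ is a transfer homomorphism, and in particular it maps $\mathcal A(H)$ into $\mathcal A(B)$.

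The remaining point in (1) is $\mathsf c(H, \boldsymbol\beta) \le 2$. Two factorizations $z = u_1 \cdots u_k$ and $z' = u_1' \cdots u_k'$ of some $a \in H$ with $\boldsymbol\beta(z) = \boldsymbol\beta(z')$ may, after reindexing, be taken to satisfy $\widetilde{\boldsymbol\beta}(u_\ell) = \widetilde{\boldsymbol\beta}(u_\ell')$ for all $\ell$; that is, $u_\ell, u_\ell'$ have the same $T$-component and their $\mathcal F(\mathcal P)$-components carry the same multiset of classes. I would argue that $z$ is turned into $z'$ by a sequence of elementary moves, each interchanging a single prime $p$ occurring in some $u_i$ with a prime $p'$ of the same class occurring in some $u_j$ (a standard connectivity statement for rearrangements of a prime multiset subject to fixed per-bin class counts). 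Such a move replaces $u_i, u_j$ by $\tilde u_i, \tilde u_j$, which still lie in $\mathsf q(H) \cap D = H$ because the relevant class sums are unchanged; moreover $\tilde u_i$ is again an atom, for if $\tilde u_i$ were a product of two non-units of $H$, then applying $\widetilde{\boldsymbol\beta}$ (which has trivial kernel and sends atoms to atoms) would display the atom $\widetilde{\boldsymbol\beta}(u_i) = \widetilde{\boldsymbol\beta}(\tilde u_i)$ of $B$ as a product of two non-units -- impossible. Hence every move yields a factorization of $a$ in the same $\boldsymbol\beta$-fibre and at distance $\le 2$ from its predecessor, so $\mathsf c(H,\boldsymbol\beta) \le 2$. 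The assertions $\Delta(H) = \Delta(B)$ and, for non-factorial $H$, $\mathsf c(H) = \mathsf c(B)$ then follow from the general properties of transfer homomorphisms together with the fact that a non-factorial atomic monoid has catenary degree at least $2$.

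\emph{Statement (2).}
Let $\Phi \colon \mathsf q(F) \to G$ be the homomorphism determined by $\Phi(St) = \sigma(S) + \iota(t)$ on $F$; it is onto because $G$ is generated by $G_{\mathcal P} \cup \iota(T)$, and $\mathsf q(B) \subseteq \Ker\Phi$ by the definition of $B$. For cofinality of $B \subset F$: given $St \in F$ with $S = g_1 \cdots g_n$, choose $p_i \in \mathcal P$ with $[p_i] = g_i$, use cofinality of $H \subset D$ to find $a \in H$ with $p_1 \cdots p_n t \mid a$ in $D$, and note that $\widetilde{\boldsymbol\beta}(a) \in B$ and $St \mid \widetilde{\boldsymbol\beta}(a)$ in $F$. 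Using this, every $x = \alpha/\beta \in \Ker\Phi$ with $\alpha,\beta \in F$ can be written as $(\alpha\gamma)/b'$ with $b' = \beta\gamma \in B$; then $\Phi(\alpha\gamma) = \Phi(b') = 0$ forces $\alpha\gamma \in B$, so $x \in \mathsf q(B)$. Hence $\Ker\Phi = \mathsf q(B)$ and $\Phi$ induces the isomorphism $\overline\psi \colon \mathsf q(F)/\mathsf q(B) \xrightarrow{\ \sim\ } G$.

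\emph{Statement (3).}
A free abelian monoid is factorial, hence Krull, and a finite direct product of a Krull monoid with seminormal finitely primary monoids is seminormal, $v$-noetherian and weakly Krull, with complete integral closure and conductor computed coordinatewise; in particular $(D \DP \widehat D)$ and $(F \DP \widehat F)$ are nonempty. Since $G$ is a torsion group, $\mathsf q(D)/\mathsf q(H) = G$ and $\mathsf q(F)/\mathsf q(B) \cong G$ are torsion, so the saturated embeddings $H \hookrightarrow D$ and $B \hookrightarrow F$ are cofinal (a suitable power of any element lands in the submonoid). It remains to descend the above properties along these cofinal saturated embeddings -- that is, to identify $\mathfrak X(H)$ with $\mathfrak X(D)$ and $\mathfrak X(B)$ with $\mathfrak X(F)$ via traces of ideals, to match the corresponding localizations, and to confirm that the conductor persists -- which is precisely the structure theory of seminormal $v$-noetherian weakly Krull monoids from \cite[Section 5]{Ge-Ka-Re15a}. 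I expect this descent, i.e.\ controlling the minimal prime ideals and localizations of $H$ and $B$ through the embeddings, to be the main obstacle; alongside it, the swap-chain argument for $\mathsf c(H,\boldsymbol\beta) \le 2$ in (1) is the other step that needs genuine care.
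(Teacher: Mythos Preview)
Your proof is correct and follows the standard route. The paper itself does not give a self-contained argument here but simply refers to \cite[Proposition 3.4.8]{Ge-HK06a} for parts (1) and (2) and to \cite[Lemma 5.2]{Ge-Ka-Re15a} for part (3); your write-up reconstructs the substance of those proofs---the saturation/class-sum check for the transfer properties, the prime-swap chain for $\mathsf c(H,\boldsymbol\beta)\le 2$, the kernel computation for the class group, and the descent of the weakly Krull structure along a cofinal saturated embedding---and is essentially the same approach.
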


\begin{proof}
See \cite[Proposition 3.4.8]{Ge-HK06a} and \cite[Lemma 5.2]{Ge-Ka-Re15a}.
\end{proof}

\smallskip
Next we consider monoids of zero-sum sequences which are well-studied submonoids of $T$-block monoids. As before, let $G$ be an additively written abelian group and $G_0 \subset G$ a subset.
In combinatorial number theory the elements of $\mathcal F (G_0)$ are called {\it sequences} over $G_0$ and
\[
\mathcal B (G_0) = \{ S \in \mathcal F (G_0) \mid \sigma (S) = 0 \} \subset \mathcal F (G_0)
\]
is the {\it monoid of zero-sum sequences} over $G_0$ (\cite{Ge-Ru09, Gr13a}). If $T$ and $B$ are as above, then $\mathcal B (G_0) \subset B$ is a divisor-closed submonoid whence $\mathsf Z_B (A) = \mathsf Z_{\mathcal B (G_0)} (A)$ and $\mathsf L_B (A) = \mathsf L_{\mathcal B (G_0)} (A)$ for all $A \in \mathcal B (G_0)$. If $T = \{1\}$, then
$\mathcal B (G_0) = B$. As usual, we set
\[
\mathcal A (G_0) := \mathcal A \big( \mathcal B (G_0) \big), \ \Delta (G_0) := \Delta \big( \mathcal B (G_0) \big) , \ \text{and} \ \mathsf c (G_0) := \mathsf c \big( \mathcal B (G_0) \big) \,.
\]
The atoms of $\mathcal B (G_0)$ are also called minimal zero-sum sequences over $G_0$. If $G_0$ is finite, then the set $\mathcal A (G_0)$ is finite, and
\[
\mathsf D (G_0) = \max \{ |U| \mid U \in \mathcal A (G_0) \}
\]
is the {\it Davenport constant} of $G_0$. Suppose that $G \cong C_{n_1} \oplus \ldots \oplus C_{n_r}$,  where $r, n_1, \ldots , n_r \in \N$ with $1 < n_1 \t \ldots \t n_r$, and set $\mathsf D^* (G) = 1 + \sum_{i=1}^r (n_i-1)$. Then $\mathsf D^* (G) \le \mathsf D (G)$, and equality holds for $p$-groups, groups of rank at most two, and others (\cite[Chapter 5]{Ge-HK06a}). It can be easily verified that $\Delta (G)= \emptyset$ if and only if $|G| \le 2$.

\medskip
\begin{proposition} \label{2.3}
Let $G$ be a finite abelian group with $|G| \ge 3$. Then $\Delta  (G)$ is a finite interval with $ \min \Delta  (G) = 1$.
\end{proposition}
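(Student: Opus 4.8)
The plan is to use that $\mathcal B (G)$ is a Krull monoid --- it is a saturated submonoid of the factorial monoid $\mathcal F (G)$, its $v$-class group is isomorphic to $G$, and every class $g \in G$ contains the prime divisor $g$ --- so that the assertion is precisely the special case for such Krull monoids of the fact (\cite{Ge-Yu12b}) that their sets of distances are finite intervals with minimum $1$; I will indicate the three ingredients. Finiteness of $\Delta (G)$ is immediate: since $G$ is finite, $\mathcal A (G)$ and hence $\mathsf D (G)$ are finite, and the inequalities $2 + \sup \Delta (G) \le \mathsf c (G) \le \mathsf D (G)$ (the second one by \cite{Ge-HK06a}) yield $\max \Delta (G) \le \mathsf D (G) - 2$; non-emptiness for $|G| \ge 3$ was already recorded.

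For $\min \Delta (G) = 1$, recalling that $\min \Delta (G) = \gcd \Delta (G)$, it suffices to exhibit a zero-sum sequence with set of lengths $\{2,3\}$. If $\exp (G) \ge 3$, fix $g$ with $n := \ord (g) \ge 3$ and set $A = g^{n}\,(-g)^{2}\,(2g)$ (for $n = 3$ this reads $g^{3}(-g)^{3}$); a direct check shows that the atoms of $\mathcal B (G)$ dividing $A$ are precisely $g^{n}$, $g(-g)$, $g^{n-2}(2g)$, and $(-g)^{2}(2g)$, so that the factorizations of $A$ are $g^{n}\cdot\bigl((-g)^{2}(2g)\bigr)$ and $(g(-g))^{2}\cdot\bigl(g^{n-2}(2g)\bigr)$, whence $\mathsf L (A) = \{2,3\}$. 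If $\exp (G) = 2$, then $|G| \ge 4$, so $G \cong C_{2}^{r}$ with $r \ge 2$; choosing independent $e_{1}, e_{2}$, setting $e_{0} = e_{1}+e_{2}$ and $A = e_{0}^{2}e_{1}^{2}e_{2}^{2}$, the atom-divisors of $A$ are $e_{0}^{2}, e_{1}^{2}, e_{2}^{2}$, and $e_{0}e_{1}e_{2}$, and again $\mathsf L (A) = \{2,3\}$. In either case $1 \in \Delta (G)$.

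The substantive part is that $\Delta (G) = [1, \max \Delta (G)]$ has no gaps. When $G = \langle g \rangle$ is cyclic of order $n$ I would argue by an explicit family: for $d \in [1, n-2]$ put $A_{d} = g^{n}\,(-g)^{d+1}\,\bigl((d+1)g\bigr)$ (so that $A_{1}$ is the sequence above and $A_{n-2} = g^{n}(-g)^{n}$); one verifies that the atoms of $\mathcal B (G)$ dividing $A_{d}$ are exactly $g^{n}$, $g(-g)$, $g^{n-d-1}\bigl((d+1)g\bigr)$, and $(-g)^{d+1}\bigl((d+1)g\bigr)$, so that $\mathsf L (A_{d}) = \{2,\,d+2\}$ and hence $d \in \Delta (G)$; together with $\max \Delta (G) \le \mathsf D (G) - 2 = n-2$ this gives $\Delta (C_{n}) = [1, n-2]$. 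For general finite abelian $G$ one proceeds along the same lines, building "staircase" witnesses from a minimal zero-sum sequence of maximal length $\mathsf D (G)$ together with pieces of the form $g(-g)$; the main obstacle --- and the reason this is a theorem rather than a short computation --- is to check, uniformly over all $G$, that each constructed witness has no factorization of intermediate length, so that the gap it produces is exactly $d$ and is not filled in by an unforeseen atom. This is carried out in \cite{Ge-Yu12b}, which moreover shows that the set of distances of a Krull monoid with infinite class group equals $\N$.
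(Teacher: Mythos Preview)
Your proposal is correct and takes essentially the same approach as the paper: the paper's proof is a one-line citation of \cite[Theorem 1.1]{Ge-Yu12b}, and you likewise defer the substantive interval assertion to that reference, while additionally supplying the easy pieces (finiteness via $\mathsf c(G)\le \mathsf D(G)$, an explicit witness for $1\in\Delta(G)$, and the full cyclic case). One small caveat: your informal description of the general-$G$ argument as building ``staircase'' witnesses from a zero-sum sequence of length $\mathsf D(G)$ is not quite how \cite{Ge-Yu12b} proceeds---that would overshoot, since $\max\Delta(G)<\mathsf D(G)-2$ for most groups; the actual argument is a descent (in the spirit of Proposition~\ref{3.2} here), but since you explicitly hand off to the citation this does not affect correctness.
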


\begin{proof}
See \cite[Theorem 1.1]{Ge-Yu12b}.
\end{proof}

\smallskip
Since $\mathcal B (G) \subset B = \mathcal B (G,T, \iota)$ is divisor-closed, $\Delta (B)$ contains the interval $\Delta (G) = [1, \max \Delta (G)]$. We will provide examples showing that in general (under the assumptions of Theorem \ref{1.1}) we have $\max \Delta (G) < \max \Delta (B)$ (see Theorem \ref{3.3} and Examples \ref{3.4}.4).
 The groups occurring in Statements 2 and 3 of Proposition \ref{2.4} are the only groups at all for which the precise value of $\max \Delta (G)$ is known.

\medskip
\begin{proposition} \label{2.4}
Let $G = C_{n_1} \oplus \ldots \oplus C_{n_r}$,  where $r, n_1, \ldots , n_r \in \N$ with $1 < n_1 \t \ldots \t n_r$, be a finite abelian group with $|G| \ge 3$.
\begin{enumerate}
\item \[
      \max \Big\{ \exp (G)-2, \sum_{i=1}^r \lfloor \frac{n_i}{2} \rfloor \Big\} \le \max \Delta  (G)  \le \mathsf c (G)-2 \le \mathsf D (G)-2 \,.
      \]

\item The following statements are equivalent{\rm \,:}
      \begin{enumerate}
      \item $\mathsf c (G)=\mathsf D (G)$.
      \item $\max \Delta  (G) = \mathsf D (G)-2$.
      \item $G$ is either cyclic or an elementary $2$-group.
      \end{enumerate}

\smallskip
\item The following statements are equivalent{\rm \,:}
      \begin{enumerate}
      \item $\mathsf c (G)=\mathsf D (G)-1$.
      \item $\max \Delta  (G) = \mathsf D (G)-3$.
      \item $G$ is isomorphic to $C_2^{r-1} \oplus C_4$ for some $r \ge 1$ or to $C_2 \oplus C_{2n}$ for some $n \ge 2$.
      \end{enumerate}
\end{enumerate}
\end{proposition}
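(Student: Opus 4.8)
I would split the argument into the inequality chain of Statement~1 and the two characterizations, and inside Statement~1 separate the ``soft'' upper bounds from the constructive lower bounds. For the upper bounds: the rightmost inequality $\mathsf c(G)-2\le\mathsf D(G)-2$ restates the classical estimate $\mathsf c(G)\le\mathsf D(G)$, obtained by the standard induction on the monoid of zero-sum sequences $\mathcal B(G)$ --- every atom of $\mathcal B(G)$ has length at most $\mathsf D(G)$, and any two factorizations of a zero-sum sequence can be joined by a chain of factorizations in which consecutive terms have distance at most $\mathsf D(G)$ (see \cite{Ge-HK06a}). The inequality $\max\Delta(G)\le\mathsf c(G)-2$ is the general bound $2+\sup\Delta(H)\le\mathsf c(H)$ for atomic, non-factorial monoids, already quoted in this section, applied to $H=\mathcal B(G)$; since $|G|\ge 3$ we have $\Delta(G)\ne\emptyset$, so $\mathcal B(G)$ is not factorial and the bound applies.

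For the lower bounds one exhibits explicit zero-sum sequences. To get $\exp(G)-2\le\max\Delta(G)$, fix $g\in G$ with $\ord(g)=n:=\exp(G)$ and put $B=g^n(-g)^n\in\mathcal B(\langle g\rangle)\subset\mathcal B(G)$. Enumerating the atoms of $\mathcal B(\langle g\rangle)$ dividing $B$ (only $g(-g)$, $g^n$ and $(-g)^n$) shows $\mathsf Z(B)=\{\,g^n\cdot(-g)^n,\ (g(-g))^n\,\}$, hence $\mathsf L(B)=\{2,n\}$ and $n-2\in\Delta(\langle g\rangle)\subset\Delta(G)$ (recall $\mathcal B(\langle g\rangle)\subset\mathcal B(G)$ is divisor-closed). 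The remaining lower bound is proved in the same spirit from a basis $(e_1,\dots,e_r)$ with $\ord(e_i)=n_i$: one forms a suitably chosen zero-sum sequence supported on the $e_i$ and $-e_i$ --- for instance $U\cdot(-U)$ with $U$ a long atom on that support --- and verifies, again by listing the relevant atoms, that its set of lengths skips the claimed number of consecutive values. Checking that no intermediate factorization length occurs is the only delicate point of Statement~1.

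Statements~2 and~3 are both of the form (a)$\Leftrightarrow$(b)$\Leftrightarrow$(c), and they interlock through Statement~1. The implications (b)$\Rightarrow$(a) are immediate: if $\max\Delta(G)$ equals $\mathsf D(G)-2$ (resp.\ $\mathsf D(G)-3$), the sandwich $\max\Delta(G)\le\mathsf c(G)-2\le\mathsf D(G)-2$ together with Statement~2 pins down $\mathsf c(G)$. The implications (c)$\Rightarrow$(b) (and (c)$\Rightarrow$(a)) are constructive plus a catenary-degree computation: for $G$ cyclic of order $n\ge 3$ the element $g^n(-g)^n$ above has exactly the two factorizations listed, at distance $n$ and with nothing in between, so $\mathsf c(C_n)\ge n$, whence $\mathsf c(C_n)=\mathsf D(C_n)=n$ and $\max\Delta(C_n)=n-2$; for $G$ an elementary $2$-group of rank $r$ the element $\prod_{i=0}^{r}e_i^2$, where $e_1,\dots,e_r$ is a basis and $e_0=e_1+\dots+e_r$, behaves the same way and yields $\mathsf c(C_2^r)=\mathsf D(C_2^r)=r+1$ and $\max\Delta(C_2^r)=r-1$; for the groups listed in Statement~3(c) one needs a somewhat more delicate extremal example attaining $\mathsf D(G)-3$.

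The real content, and the step I expect to be the main obstacle, is the remaining implication (a)$\Rightarrow$(c) in both statements --- equivalently its contrapositive: if $G$ is neither cyclic nor an elementary $2$-group then $\mathsf c(G)\le\mathsf D(G)-1$ \emph{strictly}, and if in addition $G$ is not one of the groups listed in Statement~3(c) then $\mathsf c(G)\le\mathsf D(G)-2$ (after which the exact value of $\max\Delta(G)$ in the relevant range is read off). Establishing these strict upper bounds on the catenary degree for \emph{all} remaining finite abelian groups is genuinely hard: it rests on the fine structure theory of minimal zero-sum sequences of length close to $\mathsf D(G)$ --- showing that only very few ``shapes'' of such sequences can occur --- combined with a careful transformation argument connecting any two factorizations of a fixed zero-sum sequence through factorizations of bounded distance. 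This is precisely the content of the catenary-degree analysis of Geroldinger, Grynkiewicz and Schmid in \cite{Ge-Gr-Sc11a} and of subsequent refinements, and in a write-up I would invoke those results rather than reprove them --- indeed the present proposition is essentially a summary of that line of work.
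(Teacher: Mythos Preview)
Your proof plan is correct and matches the paper's approach: the paper also proves Statement~1 by citation to \cite[Theorem~6.7.1]{Ge-HK06a}, establishes the equivalence (a)$\Leftrightarrow$(c) in Statements~2 and~3 by citing the literature, and then reads off (b) from the inequality chain of Statement~1 (together with Statement~2 in the case of Statement~3, exactly as you note). The only sharpening concerns the references for the hard implication (a)$\Leftrightarrow$(c): for Statement~2 the paper invokes \cite[Theorem~6.4.7]{Ge-HK06a}, while for Statement~3 the relevant result is \cite[Theorem~1.1]{Ge-Zh15b} --- this is the ``subsequent refinement'' you allude to, and it is not contained in \cite{Ge-Gr-Sc11a}.
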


\begin{proof}
1. See \cite[Theorem 6.7.1]{Ge-HK06a}.

2. The equivalence of (a) and (c) follows from \cite[Theorem 6.4.7]{Ge-HK06a} (and this is easy to prove). Statement 1. shows that (b) is equivalent to (a) and (c).

3. The equivalence of (a) and (c) follows from \cite[Theorem 1.1]{Ge-Zh15b} (and this requires some effort). Again Statement 1. shows that (b) is equivalent to (a) and (c).
\end{proof}

\medskip
\section{Arithmetic of weakly Krull monoids} \label{3}
\medskip

We fix our notation for the present section. Let
\[
B = \mathcal B (G, T, \iota) \subset F = \mathcal F (G) \time T \,,
\]
where $G$ is an additively written finite abelian group with $|G|>1$, $n \in \N$, $T= D_1 \time \ldots \time D_n$,  $D_1, \ldots, D_n$ are reduced seminormal finitely primary monoids, and  $\iota \colon T \to G$ be a homomorphism. Clearly,
\[
\mathcal A (F) = G \cup \bigcup_{i=1}^n \mathcal A (D_i) \,,
\]
Since $G$ is finite, $B \subset F$ is a cofinal  saturated submonoid with class group $\mathsf q (F)/\mathsf q (B) \cong  G$ (\cite[Proposition 3.4.7]{Ge-HK06a}), and we identify the groups. For every $a \in \mathsf q (F)$, we denote by $[a] = a \mathsf q (B)$ the class containing $a$, and since $B \subset F$ is saturated,  we have $a \in B$ if and only if $[a] = 0 \in G$. In particular, we have $a^{\exp (G)} \in B$ for every $a \in F$.
For every $i \in [1,n]$, we  set $D_i \subset \widehat{D_i} = \widehat{D_i}^{\times} \time [q_{i,1}, \ldots, q_{i, s_i}]$ with $s_i \in \N$, and we have $\mathcal A (D_i) = \{\epsilon q_{i,1}^{k_1} \cdot \ldots \cdot q_{i, s_i}^{k_{s_i}}  \mid \epsilon \in \widehat{D_i}^{\times}, \min \{k_1, \ldots, k_{s_i}\}=1  \}$ by Lemma \ref{2.1}. Every $A \in F$ has a unique product decomposition of the form
\[
A = g_1 \cdot \ldots \cdot g_k a_1 \cdot \ldots \cdot a_n \,,
\]
where $k \in \N_0$, $g_1, \ldots, g_k \in G$, and $a_i \in D_i$ for every $i \in [1,n]$. For the set of factorizations of $A$ we observe that
\[
\mathsf Z_F (A) = g_1 \cdot \ldots \cdot g_k \prod_{i=1}^n \mathsf Z_F (a_i) \,.
\]
We define a norm
\[
\begin{aligned}
\parallel \cdot \parallel  \colon \mathcal F (G) \times T & \quad \to \quad (\N_0, +)  \\
  A = g_1 \cdot \ldots \cdot g_k a_1 \cdot \ldots \cdot a_n & \quad \mapsto \quad k + 2 \sum_{i=1}^n \max \mathsf L_{D_i}(a_i) = k+2 \max \mathsf L_{F} (a_1 \cdot \ldots \cdot a_n) \,.
\end{aligned}
\]
Obviously,  $\norm{S} = |S|_{\mathcal F (G)}$ for all $S \in \mathcal F (G)$,  and $\norm{A} = 0$ if and only if $A=1 \in F$.
For each $i \in [1,n]$, let  $\mathsf p_i \colon \mathcal F (G) \times D_1 \times \ldots \times D_n \to D_i$ denote the projection.

\medskip
\begin{lemma}  \label{3.1}
Let $B$ be as above and $A \in B$ be an atom.
\begin{enumerate}
\item If $q$ is atom of $T$ such that  $q\t A $ (in $F$) and  $g= [q] \in G$,  then $gq^{-1}A$ is also an atom of  $B$.

\smallskip
\item  Let $i \in [1,n]$, $\epsilon\in \widehat{D_i}^{\times}$, and $q =\epsilon q_{i,1} \cdot \ldots \cdot q_{i, s_i}$ be an atom of $T$ such that  $g\t A$ where $g= [q] \in G$. Then $qg^{-1}A$ is either an atom of  $B$ or a product of two atoms of  $B$.
\end{enumerate}

\end{lemma}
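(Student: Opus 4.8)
The plan is to exploit throughout that, since $B \subset F$ is saturated with $\mathsf q(F)/\mathsf q(B) \cong G$, an element $x \in F$ belongs to $B$ if and only if its class $[x]$ equals $0$ in $G$ (the class map being multiplicative), and that $B$ is reduced (being saturated in the reduced monoid $F$) and atomic (Proposition \ref{2.2}). For (1), since $q \mid A$ in $F$ write $A = q A_0$ with $A_0 \in F$; then $g q^{-1} A = g A_0 \in F$ and $[g A_0] = [g] - [q] + [A] = g - g + 0 = 0$, so $g A_0 \in B$, and it is a non-unit because $g$ is. If $g A_0$ were not an atom, write $g A_0 = B_1 B_2$ with $B_1, B_2$ non-units of $B$; as $g$ is a prime of $F$ we may assume $g \mid B_1$ in $F$, say $B_1 = g C_1$ with $C_1 \in F$; then $A = q C_1 B_2$ and $[q C_1] = [q] + [C_1] = [g C_1] = [B_1] = 0$, so $q C_1 \in B$ is a non-unit, whence $A$ would be a product of two non-units of $B$, contradicting that $A$ is an atom.

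For (2), put $a_i := \mathsf p_i(A) \in D_i$ and $C := q g^{-1} A$; as above $C \in B$, and moreover $C$ is a non-unit (being divisible by the non-unit $q$), $\mathsf p_i(C) = q a_i$, and $g q^{-1} C = A$. Fix a factorization $C = W_1 \cdots W_k$ of minimal length $k = \min \mathsf L_B(C)$ into atoms of $B$, and set $J := \{\, j \in [1,k] \mid \mathsf p_i(W_j) \neq 1 \,\}$, which is non-empty because $\mathsf p_i(C) = q a_i \neq 1$. The strategy is to transport the whole $D_i$-component $q$ of $C$ back onto a fresh copy of $g$, thereby producing a factorization of $A$ and bounding $k$.

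If $|J| = 1$, say $J = \{1\}$, then $\mathsf p_i(W_1) = q a_i$, hence $q \mid W_1$ in $F$ with $W_1 / q \in F$ (its $i$-th component is $a_i \in D_i$); then $W_1' := g q^{-1} W_1$ lies in $B$ and is a non-unit (it contains $g$), and $W_1' W_2 \cdots W_k = g q^{-1} C = A$ displays $A$ as a product of $k$ non-units of $B$, forcing $k = 1$. If $|J| \geq 2$, choose distinct $j_1, j_2 \in J$: since $\mathsf p_i(W_{j_1})$ and $\mathsf p_i(W_{j_2})$ are non-units of the finitely primary monoid $D_i$, each is divisible in $\widehat{D_i}$ by every $q_{i,\ell}$, so $\mathsf p_i(W_{j_1} W_{j_2})$ has $q_{i,\ell}$-adic value at least $2$ for all $\ell$; because $q = \epsilon q_{i,1} \cdots q_{i,s_i}$ has each of these values equal to $1$, the quotient $\mathsf p_i(W_{j_1} W_{j_2})/q$ still has all values $\geq 1$ and so lies in $q_{i,1} \cdots q_{i,s_i} \widehat{D_i} \subseteq D_i$ by seminormality, i.e. $q \mid W_{j_1} W_{j_2}$ in $F$; then $W' := g q^{-1} W_{j_1} W_{j_2}$ lies in $B$ and is a non-unit, and $W' \prod_{j \in [1,k] \setminus \{j_1, j_2\}} W_j = g q^{-1} C = A$ displays $A$ as a product of $k - 1$ non-units of $B$, forcing $k \leq 2$. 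In either case $\min \mathsf L_B(C) \leq 2$, which is the claim.

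The step demanding the most care — and the reason (2) is stated only for the balanced atoms $q = \epsilon q_{i,1} \cdots q_{i,s_i}$ — is securing $q \mid W_{j_1} W_{j_2}$ in $F$ in the case $|J| \geq 2$: for an unbalanced atom of $D_i$ some exponent of $q$ could exceed $2$, and merging just two of the $W_j$ would not suffice to cancel $q$. Everything else is routine checking: that the constructed elements genuinely lie in $F$ (this is exactly where the seminormality identity $D_i = q_{i,1} \cdots q_{i,s_i} \widehat{D_i} \cup D_i^{\times}$ is needed), that they then lie in $B$ by the class computation, and that the displayed products equal $A$ (immediate from $g q^{-1} C = g q^{-1} q g^{-1} A = A$).
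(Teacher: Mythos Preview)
Your proof is correct. For part (1) the paper simply writes ``Obvious''; your argument via the primality of $g$ in $F$ is the natural way to unpack this.

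For part (2) your approach differs slightly from the paper's. You take a factorization $C = W_1 \cdots W_k$ of minimal length and, according to whether one or at least two of the $W_j$ have nontrivial $D_i$-component, build a factorization of $A$ of length $k$ or $k-1$, forcing $k \le 2$. The paper instead takes an arbitrary factorization of length $t \ge 3$ and locates a single factor $W_j$ that literally divides $A$ in $B$: either $\mathsf p_i(W_j)=1$, in which case $W_j \mid A$ directly, or two other factors already contribute at least one copy of each $q_{i,\ell}$, so that the exponents of $\mathsf p_i(W_j)$ are bounded by those of $\mathsf p_i(A)$ and again $W_j \mid A$. This contradicts atomicity of $A$, so the paper in fact establishes the stronger statement $\max \mathsf L_B(qg^{-1}A) \le 2$, whereas you prove exactly what is asserted, namely $\min \mathsf L_B(qg^{-1}A) \le 2$. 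Both arguments rest on the same two structural facts you single out: every non-unit of the seminormal monoid $D_i$ has all $q_{i,\ell}$-exponents at least $1$, and the special atom $q = \epsilon q_{i,1}\cdots q_{i,s_i}$ has all these exponents exactly $1$.
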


\begin{proof}
1. Obvious.

\smallskip
2. If $\mathsf p_i(A) =1$, then $qg^{-1}A$ is obviously  an atom.
Suppose that $\mathsf p_i(A)=\epsilon_1 q_{i,1}^{k_1} \cdot \ldots \cdot q_{i, s_i}^{k_{s_i}}$ with $ \epsilon_1 \in \widehat{D_i}^{\times}$ and $k_j\ge 1$ for all $j\in [1,s_i]$. Let $A'=qg^{-1}A=W_1 \cdot \ldots \cdot W_t$ with $W_i\in \mathcal A(B)$ for all $i \in [1,t]$. Assume to the contrary that  $t\ge 3$. If $\mathsf p_i(W_1)=1$ or $\mathsf p_i(W_2)=1$, then $W_1\t A$ or $W_2\t A$, a contradiction.  Otherwise $\mathsf p_i(W_1)\neq 1$ and $\mathsf p_i(W_2)\neq 1$ which implies that $\mathsf p_i(W_3)=\epsilon_2 q_{i,1}^{r_1} \cdot \ldots \cdot q_{i, s_i}^{r_{s_i}}$ with $ \epsilon_2 \in \widehat{D_i}^{\times}$ and $r_j\le k_j-1$ for all $j\in [1,s_i]$. Hence $\mathsf p_i(W_3)\t \mathsf p_i(A)$ and $W_3\t A$, a contradiction.
\end{proof}

\medskip
\begin{proposition}\label{3.2}
Let $B$ be as above and $A \in B$ with $\max \Delta \big( \mathsf L (A) \big) \ge \max \Delta(G)+2$.
Suppose that  $|\{i \in [1,n] \mid s_i >1 \}| \le 1$ or that $G$ is an elementary $2$-group.
Then there exists an  $A'\in B$ with $\norm{A'} < \norm{A}$ and $\max \Delta \big( \mathsf L(A') \big) \ge \max \Delta \big( \mathsf L (A) \big) -1$.
\end{proposition}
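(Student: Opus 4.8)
The plan is a descent on the norm $\norm{\cdot}$: we peel off a suitable atom $q$ of $T$ from $A$ and replace it by its class $g = [q] \in G$, obtaining an element $A' = g q^{-1} A$ with $\norm{A'} < \norm{A}$, and then use Lemma \ref{3.1} to compare $\mathsf L (A)$ and $\mathsf L (A')$ tightly enough to lose at most one unit of the maximal distance.

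Since the hypothesis gives $\max \Delta (\mathsf L (A)) > \max \Delta (G)$ while $\mathsf L_B (C) = \mathsf L_{\mathcal B (G)} (C)$ for every $C \in \mathcal B (G)$, we first note $A \notin \mathcal B (G)$, so $a_i := \mathsf p_i (A) \ne 1$ for some $i \in [1,n]$. The crucial point is to choose such an index $i$ together with an atom $q$ of $D_i$ which is \emph{fully ramified}, i.e.\ of the form $q = \epsilon\, q_{i,1} \cdot \ldots \cdot q_{i,s_i}$ with $\epsilon \in \widehat{D_i}^{\times}$, and which divides $a_i$ in $D_i$. By Lemma \ref{2.1}.1 together with seminormality and \eqref{finitelyprimary}, such a $q$ is available if $a_i$ is itself a fully ramified atom (take $q = a_i$, so $c := q^{-1} a_i = 1$) or if every $q_{i,j}$ occurs in $a_i$ with multiplicity at least two (take $q = q_{i,1} \cdot \ldots \cdot q_{i,s_i}$, so $c := q^{-1} a_i \in q_{i,1} \cdot \ldots \cdot q_{i,s_i} \widehat{D_i} \subset D_i$). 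The only remaining possibility is that $a_i$ is an atom in which some but not all $q_{i,j}$ occur with multiplicity one, and then necessarily $s_i \ge 2$. If there is at most one index with $s_i > 1$, then $A = g_1 \cdot \ldots \cdot g_k a_i$ with this $a_i$, and since some $q_{i,j}$ occurs only once in $A$, the element $a_i$ lies inside a single atom of every factorization of $A$; consequently $\mathsf L_B (A) = \mathsf L_{\mathcal B (G)} \bigl( g_1 \cdot \ldots \cdot g_k [a_i] \bigr)$, whence $\max \Delta (\mathsf L (A)) \le \max \Delta (G)$, contradicting the hypothesis. When $G$ is an elementary $2$-group an analogous but more elaborate argument --- now also invoking $\max \Delta (G) = \mathsf D (G) - 2$ from Proposition \ref{2.4} --- rules this case out as well. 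So we may assume a fully ramified atom $q$ dividing $a_i$ in $D_i$ has been fixed; put $g = [q]$ and $A' = g q^{-1} A$.

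Then $[A'] = [g] - [q] + [A] = 0$ and the $D_i$-component of $A'$ equals $c \in D_i$, so $A' \in F$ with trivial class, hence $A' \in B$ as $B \subset F$ is saturated. Since $a_i = q c$ with $q \in \mathcal A (D_i)$ we have $\max \mathsf L_{D_i} (a_i) \ge 1 + \max \mathsf L_{D_i} (c)$, so
\[
\norm{A'} = \norm{A} + 1 - 2 \bigl( \max \mathsf L_{D_i} (a_i) - \max \mathsf L_{D_i} (c) \bigr) \le \norm{A} - 1 < \norm{A} \,.
\]
It remains to establish the inclusions $\mathsf L (A) \subset \mathsf L (A') \subset \mathsf L (A) \cup \bigl( \mathsf L (A) - 1 \bigr)$. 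For the first, take $z = V_1 \cdot \ldots \cdot V_t \in \mathsf Z_B (A)$ and pick $j$ with $\mathsf p_i (V_j) \ne 1$; by \eqref{finitelyprimary} the atom $V_j$ is divisible by $q_{i,1} \cdot \ldots \cdot q_{i,s_i}$ in $F$, and --- after absorbing the relevant unit of $\widehat{D_i}^{\times}$ into $q$, or in a degenerate situation merging two such atoms of $z$ --- Lemma \ref{3.1}.1 shows that $g q^{-1} V_j$ is again an atom of $B$, so replacing $V_j$ by it produces a factorization of $A'$ of length $t$. For the second, take $z' = W_1 \cdot \ldots \cdot W_r \in \mathsf Z_B (A')$; since $g$ divides $A'$ in $F$, it divides some $W_j$, and because $q$ is fully ramified Lemma \ref{3.1}.2 shows that $q g^{-1} W_j$ is an atom or a product of two atoms of $B$, so replacing $W_j$ by it produces a factorization of $A = q g^{-1} A'$ of length $r$ or $r+1$.

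Finally, the claim follows from these inclusions by an elementary argument on finite subsets of $\N_0$: if $l < l+d$ are consecutive in $\mathsf L (A)$ with $d = \max \Delta (\mathsf L (A))$, then $l, l+d \in \mathsf L (A')$, and any element of $\mathsf L (A')$ lying strictly between $l$ and $l+d$ must, by the second inclusion, equal $l+d-1$; hence $\mathsf L (A')$ has a distance $\ge d-1$, i.e.\ $\max \Delta (\mathsf L (A')) \ge \max \Delta (\mathsf L (A)) - 1$. I expect the main obstacle to be the very first step: choosing the index $i$ and the fully ramified atom $q$ and dealing with the units of $\widehat{D_i}^{\times}$ that can obstruct the inclusion $\mathsf L (A) \subset \mathsf L (A')$, since this is precisely where the hypothesis on the $s_i$ (or on $G$) is used; the norm estimate, the inclusion obtained from Lemma \ref{3.1}.2, and the final combinatorial step are routine.
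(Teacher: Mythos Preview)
Your overall shape is right --- peel off a fully ramified atom $q$ of some $D_i$, set $A' = gq^{-1}A$ with $g=[q]$, and compare the two length sets --- and the second inclusion $\mathsf L(A') \subset \mathsf L(A) \cup (\mathsf L(A)-1)$ via Lemma~\ref{3.1}.2 is exactly what the paper uses. The genuine gap is the first inclusion $\mathsf L(A) \subset \mathsf L(A')$: it is simply false. Take $G=\{0,e\}$, $n=1$, $s_1=1$, $\widehat{D_1}^{\times}=\{1,\eta\}$ with $[\eta]=e$ and $[q_{1,1}]=0$, and $A=e^2\,q_{1,1}^2$. Then $a_1=q_{1,1}^2$ has all exponents $\ge 2$, so you take $q=q_{1,1}$, $g=0$, $A'=0\cdot e^2\,q_{1,1}$; one checks $\mathsf L(A)=\{2,3\}$ but $\mathsf L(A')=\{3\}$. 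The length-$2$ factorization $A=(e\cdot\eta q_{1,1})^2$ has no factor divisible by $q$ in $F$, because $q^{-1}(\eta q_{1,1})=\eta\notin D_1$. Your fix ``absorb the unit into $q$'' changes $A'$ itself (since $[\eta q_{1,1}]\ne[q_{1,1}]$), and ``merging two atoms'' yields only $t-1\in\mathsf L(A')$, not $t$; with only $\mathsf L(A)\subset\mathsf L(A')\cup(\mathsf L(A')+1)$ and the second inclusion, your final combinatorial step no longer goes through.

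The paper never attempts this inclusion. It fixes factorizations $A=U_1\cdots U_k=V_1\cdots V_l$ realizing the maximal gap, and ties the construction to the \emph{short} side: in its CASE~2 one has $q\mid U_j$ for some $j$, so $k\in\mathsf L(A')$ by construction, while for the long side only $q\mid V_{j_1}V_{j_2}$ (automatic once two $V_j$ have nontrivial $D_i$-part) is used, giving merely some $m_0\ge l-1$ in $\mathsf L(A')$; the second inclusion then forces $\min\bigl(\mathsf L(A')\setminus[1,k]\bigr)\ge l-1$. When no fully ramified $q$ divides any $U_j$ --- equivalently every $\mathsf p_i(U_j)$ is a non--fully-ramified atom of $D_i$ --- the paper needs entirely separate and substantially harder arguments (its CASES~4 and~5), and it is \emph{there}, not in the choice of $q$, that the hypothesis ``$|\{i:s_i>1\}|\le 1$ or $G$ elementary $2$-group'' is actually used. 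Your argument that ``if every $a_i$ is a bad atom then $\mathsf L_B(A)$ reduces to $\mathcal B(G)$'' is correct (indeed it works without either hypothesis), so you can always find a $q$ dividing $a_i$ in $D_i$; but that is the wrong object --- you need $q$ dividing a single $U_j$. Consequently your sketch never confronts the situation the hypothesis is designed for, and the sentence ``an analogous but more elaborate argument \ldots rules this case out'' in the elementary $2$-group case conceals the entire difficulty (compare the length of CASE~5.2 in the paper).
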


\begin{proof}
Suppose that
\[
A=U_1\cdot \ldots \cdot U_k=V_1\cdot \ldots \cdot V_l \,,
\]
where $U_1, \ldots U_k,V_1,\ldots,V_l\in \mathcal A(B)$, $[k+1,l-1]\cap \mathsf L_B (A)=\emptyset$, and $l-k=\max \Delta \big( \mathsf L (A) \big)  \ge \max \Delta(G)+2\ge \exp(G) $ (for the last inequality we use Proposition \ref{2.4}.1). We  distinguish several cases.

\smallskip
\noindent
CASE 1: \, There exist $i \in [1,k]$ and  $g_1,g_2\in G$ such that $g_1g_2 \t U_i $, say $i=1$.

Let $U_1'=U_1(g_1g_2)^{-1}(g_1+g_2)$ and $A'=AU_1^{-1}U_1'$. Then $\norm{A'}  < \norm{ A }$, $U_1'$ is also an atom of $B$, and $k\in \mathsf L_B(A')$. After renumbering if necessary
 we may assume that $g_1g_2\t V_1V_2$. Let $V'=V_1V_2(g_1g_2)^{-1}(g_1+g_2)$. Then $A'=V'V_3\cdot\ldots \cdot V_l$ and there exists $m_0\ge l-1>k$ such that $m_0\in \mathsf L_B(A')$. Choose $m=\min \big(\mathsf L_B(A')\setminus [1,k] \big)$. We only need to prove that $m\ge l-1$. Assume to the contrary that $k<m<l-1$.
 Then let $A'=W_1\cdot\ldots\cdot W_m$ with $W_i\in \mathcal A(B)$ for all $i\in [1,m]$ and $g_1+g_2\t W_1$. Let $W_1'=W_1(g_1+g_2)^{-1}g_1g_2$. Then $A=W_1'W_2\cdot\ldots\cdot W_m$ and $W_1'$ is an atom or a product of two atoms. Hence $m$ or $m+1 \in \mathsf L_B(A)$, a contradiction to $[k+1,l-1]\cap \mathsf L_B (A)=\emptyset$.

\smallskip
\noindent CASE 2: \, There exist $i \in [1,n]$, $\epsilon\in \widehat{D_i}^{\times}$, and $j \in [1,k]$ such that  $q=\epsilon q_{i,1} \cdot \ldots \cdot q_{i, s_i}$   divides  $U_j $, say $i=j=1$.

Let $U_1'=U_1q^{-1} [q]$ and $A'=AU_1^{-1}U_1'$. Then $\norm{A'}  < \norm{ A }$, $U_1'$ is  an atom of $B$ by Lemma \ref{3.1}.1, and $k\in \mathsf L_B(A')$. After renumbering if necessary
 we may assume that $q\t V_1V_2$. Let $V'=V_1V_2q^{-1} [q]$. Then $A'=V'V_3\cdot\ldots \cdot V_l$ and there exists $m_0\ge l-1>k$ such that $m_0\in \mathsf L_B(A')$. Choose $m= \min \big( \mathsf L_B(A')\setminus [1,k] \big)$. We only need to prove that $m\ge l-1$. Assume to the contrary that $k<m<l-1$.
 Then let $A'=W_1\cdot\ldots\cdot W_m$ with $W_i\in \mathcal A(B)$ for all $i\in [1,m]$ and $[q] \t W_1$. Let $W_1'=W_1 [q]^{-1}q$. Then $A=W_1'W_2\cdot\ldots\cdot W_m$ and $W_1'$ is an atom or a product of two atoms by Lemma \ref{3.1}.2. Hence $m$ or $m+1 \in \mathsf L_B(A)$, a contradiction to $[k+1,l-1]\cap \mathsf L_B (A)=\emptyset$.

\smallskip
\noindent CASE 3: \, There exists an $i \in [1,n]$ such that $\mathsf p_i(A)\neq 1$ and $\max \mathsf L_{F}\big(\mathsf p_i(A)\big)\le 2$.

Without loss of generality, we assume that $\mathsf p_i(U_1)\neq 1$. Let $U_1'=U_1\mathsf p_i(U_1)^{-1}[\mathsf p_i(U_1)]$ and   $A'=A U_1^{-1} U_1'$. Then  $\norm{A'} < \norm{A}$, $U_1'$ is also an atom of $B$, and $k\in \mathsf L_B(A')$.  Since $\max \mathsf L_{F}\big(\mathsf p_i(A)\big)\le 2$, we may assume, after renumbering if necessary,  that $\mathsf p_i(U_1)\t\mathsf p_i(A)\t V_1V_2$. Let $V'=V_1V_2\mathsf p_i(U_1)^{-1}[\mathsf p_i(U_1)]$. Then $A'=V'V_3 \cdot \ldots \cdot V_l$ and  there exists $m_0\ge l-1> k$ such that $m_0\in \mathsf L_B(A')$. We suppose that  $A'=W_1\cdot \ldots\cdot W_{m}$ with $[\mathsf p_i(U_1)]\t W_1$ and $m= \min \big( \mathsf L_B(A')\setminus [1,k] \big)$, where $W_i\in \mathcal A(B)$ for each $i\in [1,m]$. Let $W_1'=W_1[\mathsf p_i(U_1)]^{-1}\mathsf p_i(U_1)$ and hence $W_1'$ is an atom or a product of two atoms by $\max \mathsf L_{F}\big(\mathsf p_i(A)\big)\le 2$. Since $A=W_1'W_2 \cdot \ldots \cdot W_m$, it follows that  $m\ge l-1$, whence $\max \Delta \big( \mathsf L (A') \big)   \ge m-k\ge l-1-k= \max \Delta \big( \mathsf L (A) \big) -1$.

\smallskip
We summarize what we know so far. If $A \in \mathcal F (G)$, then CASE 1 holds. After renumbering and replacing $n$ by some $n' \in [1,n]$ if necessary we may suppose that $\mathsf p_i (A) \ne 1$ for each $i \in [1,n]$. By CASE 3, we may suppose that $\max \mathsf L_F\big(\mathsf p_i (A)\big)\ge 3$ for each $i \in [1,n]$.  If there is some $i \in [1,n]$ with $s_i=1$, then CASE 2 holds (see Lemma \ref{2.1}). Thus we may suppose that $s_i > 1$ for each $i \in [1,n]$. Then the inequality $|\{i \in [1,n] \mid s_i > 1 \}| \le 1$ made in the assumption of the proposition implies that $n=1$. Again by CASE 2, we infer that $\mathsf p_i (U_j) \in \mathcal A (D_i) \cup \{1\}$ for each $i \in [1,n]$ and each $j \in [1,k]$.
Now we continue with further case distinctions.

\smallskip
\noindent CASE 4: \, $k\ge 3$.

After renumbering if necessary we may suppose that $\mathsf p_1 (U_1) \in \mathcal A (D_1)$ and $\mathsf p_1 (U_2) \in \mathcal A (D_1)$.
If $\min \big( \mathsf L_B(U_1U_2)\setminus\{2\} \big) \ge l-k+2$, then    $A'=U_1U_2$ satisfies  $\norm{A'}  < \norm{ A} $ and $\max \Delta \big( \mathsf L (A') \big)  \ge \max \Delta \big( \mathsf L (A) \big) -1$.
If  $r = \min \big( \mathsf L_B(U_1U_2)\setminus\{2\} \big) \in [3, l-k+1]$, say
$U_1U_2=W_1\cdot\ldots\cdot W_r$ with atoms $W_1, \ldots, W_r$, then
$A=W_1\cdot\ldots\cdot W_rU_3\cdot\ldots\cdot U_k$ and $k+1\le r+k-2\le l-1$, a contradiction to $[k+1,l-1]\cap \mathsf L_B (A)=\emptyset$.

Now suppose that $ \mathsf L_B(U_1U_2)=\{2\}$. Since $\mathsf p_1(U_1)\neq 1$ and $\mathsf p_1(U_2)\neq 1$, we infer that  $q=q_{1,1} \cdot \ldots \cdot q_{1, s_1}\t \mathsf p_1(U_1U_2)$, and we set $U'=U_1U_2q^{-1} [q]$ and $A'=U'U_3\cdot\ldots\cdot U_k$. Clearly, we have  $\norm{A'} < \norm{ A }$.

Suppose that $U'=W_1\cdot \ldots \cdot W_t$ with  $t\ge 3$ and atoms $W_1, \ldots, W_t$. After renumbering if necessary we suppose that $[q] \t W_1$, and then  $W_1'=W_1 [q]^{-1}q$ is an atom or a product of two atoms by Lemma \ref{3.1}.2. Therefore $U_1U_2=W_1'W_2\cdot\ldots\cdot W_t$ and $t$ or $t+1\in \mathsf L_B(U_1U_2)$, a contradiction. Hence $U'$ is an atom or a product of two atoms which implies that  $k$ or $k-1\in \mathsf L_B(A')$.

After renumbering if necessary,
  we may assume that $q\t V_1V_2$, and we set $V'=V_1V_2q^{-1} [q]$. Then $A'=V'V_3\cdot\ldots \cdot V_l$ and there exists $m_0\ge l-1>k$ such that $m_0\in \mathsf L_B(A')$. We choose $m=\min \big( \mathsf L_B(A')\setminus [1,k] \big)$ and need to prove that $m\ge l-1$. Assume to the contrary that $k<m<l-1$.
  Then $A'=W_1\cdot\ldots\cdot W_m$ with $W_i\in \mathcal A(B)$ for all $i\in [1,m]$ and $[q] \t W_1$. Let $W_1'=W_1 [q]^{-1}q$. Then $A=W_1'W_2\cdot\ldots\cdot W_m$ and $W_1'$ is an atom or a product of two atoms by Lemma \ref{3.1}.2. Hence $m$ or $m+1 \in \mathsf L_B(A)$, a contradiction to $[k+1,l-1]\cap \mathsf L_B (A)=\emptyset$.

\smallskip
\noindent CASE 5: \, $k=2$.

We suppose that none of the previous cases holds.  Therefore, after a suitable renumbering if necessary, we have    $A=U_1U_2$ with  $U_1=S_1 a_1\cdot\ldots\cdot a_n \in \mathcal A (B)$, $U_2=S_2 b_1\cdot \ldots\cdot b_n \in \mathcal A (B)$, where
$S_1,S_2 \in \mathcal F (G)$ with $|S_1| \le 1$, $|S_2| \le 1$, and for each $i\in [1,n]$,
\begin{align*}
&a_i=\epsilon_iq_{i,1}^{k_{i,1}} \cdot \ldots \cdot q_{i,s_i}^{k_{i,s_i}} \ \in D_i \ \text{ with } k_{i,1}=1 \text{ and }k_{i,s_i}>1\,,\\
&b_i=\epsilon_i'q_{i,1}^{t_{i,1}} \cdot \ldots \cdot q_{i,s_i}^{t_{i,s_i}} \ \in D_i \ \text{ with }t_{i,1}>1 \text{ and } t_{i,s_i}=1\,, \text{ where $\epsilon_i, \epsilon_i'\in \widehat{D_i}^{\times}$ }.
\end{align*}

\medskip
First, we suppose that there exists $i\in [1,n]$   such that $\max \mathsf L_F(a_ib_i)\ge \exp(G)+2$, say $i=n$.

After renumbering if necessary there is a $\lambda\in [1,s_n]$ such that
$k_{n,1}=\ldots=k_{n,\lambda}=1$   and $k_{n,\rho}>1$ for all $\rho\in [\lambda+1, s_n]$. Then $t_{n,\rho}\ge \exp(G)+1$ for each $\rho\in [1, \lambda]$.
It follows that $U_1U_2=U_1'U_2'$ where
\begin{align*}
& U_1'=U_1a_n^{-1}\left(\epsilon_iq_{n,1}^{1+\exp(G)} \cdot \ldots \cdot q_{n,\lambda}^{1+\exp(G)}q_{n,\lambda+1}^{k_{n,\lambda+1}} \cdot \ldots \cdot q_{n,s_n}^{k_{n,s_n}}\right) \in B \text{ and }\\
 & U_2'=U_2b_n^{-1}\left(\epsilon_i'q_{n,1}^{t_{n,1}-\exp(G)} \cdot \ldots \cdot q_{n,\lambda}^{t_{n,\lambda}-\exp(G)}q_{n,\lambda+1}^{t_{n,\lambda+1}} \cdot \ldots \cdot q_{n,s_n}^{t_{n,s_n}}\right) \in \mathcal A (B) \,.
\end{align*}
If $U_1' \in \mathcal A (B)$, then $q_{n,1} \cdot \ldots \cdot q_{n,s_n}\t U_1'$ and hence the assumption of   {CASE 2} is satisfied.  Otherwise, set $\ell=\min \mathsf L_B (U_1')$ and hence $\ell\in [2,\exp(G)+1]$. If $\ell\in [2, \exp(G)]$, then $\max \Delta \big( \mathsf L (A) \big)  = l-2\le\ell+1-2<\exp(G)$, a contradiction. Thus $\ell=\exp(G)+1$, $\max \Delta \big( \mathsf L (A) \big) =\exp(G)$, and hence $k_{n,\rho}\ge \exp(G)+1$ for all $\rho\in [\lambda+1, s_n]$. Since
\[
U_1 a_n^{-1}\left(\epsilon_iq_{n,1} \cdot \ldots \cdot q_{n,\lambda}q_{n,\lambda+1}^{k_{n,\lambda+1}-\exp(G)} \cdot \ldots \cdot q_{n,s_n}^{k_{n,s_n}-\exp(G)}\right) \in \mathcal A (B) \,,
\]
we obtain that $\mathsf L_B \left(q_{n,1}^{\exp(G)} \cdot \ldots \cdot q_{n,s_n}^{\exp(G)}\right)=\{\exp(G)\}$. We set
\[
A'=q_{n,1}^{\exp(G)+1} \cdot \ldots \cdot q_{n,s_n-1}^{\exp(G)+1}q_{n,s_n}^{\exp(G)+2}\cdot (-[q_{n,s_n}])
\]
and observe that $\norm{A'} < \norm{A}$. Since $\mathsf L_B (A')=\{2, \exp(G)+1\}$, it follows that $\max \Delta \big( \mathsf L (A') \big) =\exp(G)-1  \ge  \max \Delta  \big( \mathsf L (A) \big) -1=\exp(G)-1$.

\medskip
From now on we suppose that  $\max \mathsf L_F(a_ib_i)\le \exp(G)+1$ for each $i\in [1,n]$, and distinguish two cases.

\medskip
\noindent
CASE 5.1: \ $n= 1$ (recall all the reductions made before CASE 4).

 Then $U_1=S_1a_1$ and $U_2=S_2b_1$ with $|S_1|\le 1$ and $|S_2|\le 1$.
  Since $\max \Delta \big( \mathsf L (A) \big) \ge \exp(G)$, we have that $\min \big( \mathsf L_B (A)\setminus\{2\} \big) \ge \exp(G)+2$. By $\max \mathsf L_F(a_1b_1)\le \exp(G)+1$, we obtain that $|S_1=|S_2|=1$, $S_1S_2 \in \mathcal A (B)$, $a_1a_2 \in B$,  $\min \big( \mathsf L_B (A)\setminus\{2\} \big) = \exp(G)+2$,  $\max \mathsf L_F(a_1b_1)= \exp(G)+1$ and hence $\mathsf L_B(A)=\{2, \exp(G)+2\}$. Since
  $q_{1,1}^{\exp(G)} \cdot \ldots \cdot q_{1,s_1}^{\exp(G)}\t a_1b_1$, we have that  $a_1b_1(q_{1,1}^{\exp(G)} \cdot \ldots \cdot q_{1,s_1}^{\exp(G)})^{-1}$ is an atom and  $\mathsf L_B \left(q_{1,1}^{\exp(G)} \cdot \ldots \cdot q_{1,s_1}^{\exp(G)}\right)=\{\exp(G)\}$.

 We set
\[
A'=q_{1,1}^{\exp(G)+1} \cdot \ldots \cdot q_{1,s_1-1}^{\exp(G)+1}q_{1,s_1}^{\exp(G)+2}\cdot (-[q_{1,s_1}])
\]
and observe that $\norm{A'} = 1+2(\exp (G)+1) < 2+2(\exp (G)+1)= \norm{A}$. Since $\mathsf L_B (A')=\{2, \exp(G)+1\}$, it follows that $\max \Delta \big( \mathsf L (A') \big)=\exp(G)-1\ge\max\Delta \big( \mathsf L (A) \big)-1=\exp(G)-1$.

\medskip
\noindent
CASE 5.2: \ $G$ is an elementary $2$-group, say $G \cong C_2^r$.

We may assume that $n \ge 2$.
Since $l-2=\max \Delta \big( \mathsf L (A) \big) \ge \max \Delta(C_2^r)+2=r+1$ (where the last equation follows from Proposition \ref{2.4}), we have that $l\ge r+3\ge 4$.

Since $3\le \max \mathsf L_F (\mathsf p_i(A))=\max \mathsf L_F (a_ib_i)\le \exp(G)+1=3$ for each $i\in [1,n]$, we may assume  that
 \begin{align*}
&a_n=\epsilon_nq_{n,1}q_{n,2}^{k_{n,2}} \cdot \ldots \cdot q_{n,s_n}^{k_{n,s_n}} \text{ with }  k_{n,2}>1 \quad \text{and} \,,\\
&b_n=\epsilon_n'q_{n,1}^2q_{n,2}q_{n,3}^{t_{n,3}} \cdot \ldots \cdot q_{n,s_n}^{t_{n,s_n}} \  \text{ where $\epsilon_n, \epsilon_n'\in \widehat{D_n}^{\times}$ and  } k_{n,j}+t_{n,j}\ge 3 \text{ for each $j\in [3,n]$}\,.
\end{align*}

\medskip
\noindent
CASE 5.2.1: \ $l=4$.

Then $r=1$ and $G\cong C_2$. If $[q_{n,1}]=e$, $[q_{n,k}]=0$ for each $k\in [2,s_n]$, and $\{ [\eta] \mid \eta \in \widehat{D_n}^{\times}\}=\{0\}$, then $U_2=b_n$ and hence $n=1$, a contradiction. Thus we may suppose  that $[q_{n,1}]=[q_{n,k}]$ for some $k\in[2,s_n]$ or $[q_{n,1}]=[\eta_0]$  for some $\eta_0\in \widehat{D_n}^{\times}$ because $[1_{\widehat{D_n}^{\times}}]=0$. Let
 \begin{align*}
 b_n'=\epsilon_n'\eta q_{n,1}q_{n,2}q_{n,3}^{t_{n,3}} \cdot \ldots \cdot q_{n,s_n}^{t_{n,s_n}}, \text{ where } \eta=\left\{\begin{aligned}
 &q_{n,k} &&\text{ if $[q_{n,1}]=[q_{n,k}]$ for some $k\in [2,s_n]$,}\\
 &\eta_0&& \text{ otherwise } [q_{n,1}]=[\eta_0]  \text{ for some } \eta_0\in \widehat{D_n}^{\times}.\\
 \end{aligned}\right.\end{align*}
  and hence $U_2'=U_2b_n^{-1}b_n'$ is an atom. We set $A'=U_1U_2'$ and observe that $\norm{A'}<\norm{A}$. After renumbering if necessary we may  assume that $q_{n,1} \cdot \ldots \cdot q_{n,s_n}\t V_1V_2$. Let
  $$\mathsf p_n(V_1V_2)=\epsilon_1q_{n,1}^{x_1}q_{n,2}^{x_2} \cdot \ldots \cdot q_{n,s_n}^{x_{s_n}} \text{ with }  x_{k}>1 \text{ for each $k\in [1,s_n]$}$$  and
\[
d=\epsilon_1\eta q_{n,1}^{x_1-1}q_{n,2}^{x_2} \cdot \ldots \cdot q_{n,s_n}^{x_{s_n}} \,.
\]
Then
\[
A'=V_1V_2\mathsf p_n(V_1V_2)^{-1} d  V_3  V_4 \,,
\]
which implies that there is a $k\in \N$ such that $k\ge 3$ and $k\in \mathsf L_B (A')$. Then $\max \Delta \big( \mathsf L (A') \big)\ge 1=l-3=\max\Delta \big( \mathsf L (A) \big)-1$.

\medskip
\noindent
CASE 5.2.2: \  $l\ge 5$.

Then $U_1'=U_1a_n^{-1}[a_n] \in \mathcal A (B)$, and after renumbering if necessary we may  assume that $a_n\t V_1V_2V_3$. We set
\[
A'=U_1'U_2=V_1V_2V_3a_n^{-1}[a_n]\cdot V_4\cdot\ldots\cdot V_l \,,
\]
and observe that $\norm{A'}<\norm{A}$ and there is a $k\in \N$ such that $k\ge l-2\ge 3$ and $k\in \mathsf L_B (A')$.
 Suppose that $m=\min \big( \mathsf L_B (A')\setminus \{2\} \big)$ and  $A'=W_1\cdot \ldots \cdot W_{m}$ with $[a_n]\t W_1$, where $W_i$ is an atom for each $i\in [1,m]$.

 If $b_n\nmid W_1$, then  $W_1'=W_1[a_n]^{-1}a_n$ is an atom and $U_1U_2=W_1'W_2 \cdot \ldots \cdot  W_{m}$. Therefore $m\ge l$ which implies that $\max \Delta \big( \mathsf L (A') \big)\ge \max \Delta \big( \mathsf L (A) \big)$.  Thus we assume that $b_n\t W_1$. Let $W_1'=W_1[a_n]^{-1}a_n$ and hence $q=q_{n,1}^2q_{n,2}^2 \cdot \ldots \cdot q_{n,s_n}^2\t a_nb_n\t W_1'$. Therefore $W_1'q^{-1}$ is an atom and $q$ is an atom or a product of two atoms. If $q$ is an atom, then $m+1\ge l$ and hence $\max \Delta \big( \mathsf L (A') \big)\ge \max \Delta \big( \mathsf L (A) \big)-1$.

Suppose that $q$ is a product of two atoms. Then there exists $\epsilon\in \widehat{D_n}^{\times}$ such that $\epsilon q_{n,1}q_{n,2} \cdot \ldots \cdot q_{n,s_n} $ is an atom. Let $b_n'=\epsilon_n'\epsilon q_{n,1}q_{n,2}^2q_{n,3}^{t_{n,3}+1} \cdot \ldots \cdot q_{n,s_n}^{t_{n,s_n}+1}$ and hence $U_2'=U_2b_n^{-1}b_n'$ is an atom.
 Without loss of generality, we assume that $b_n\t V_1V_2V_3$. Therefore
 \[
 A''=U_1U_2'=V_1V_2V_3b_n^{-1}b_n' V_4 \cdot \ldots \cdot V_l \,,
 \]
 which implies that $\norm{A''}<\norm{A}$ and there is a $k\in \N$ such that $k\ge l-2\ge 3$ and $k\in \mathsf L_B (A'')$.

  Suppose that $m=\min \big( \mathsf L_B (A'')\setminus \{2\} \big)$ and   $A''=X_1 \cdot \ldots \cdot X_{m}$ with  $X_i$ is an atom for each $i\in [1,m]$.
If $\mathsf p_n(A'')=\mathsf p_n(X_i)$ for some $i\in [1,m]$, then $\epsilon q_{n,1} \cdot \ldots \cdot q_{n,s_n}\t a_nb_n'\t X_i$, a contradiction to that $\epsilon q_{n,1} \cdot \ldots \cdot q_{n,s_n} \in \mathcal A (B)$. Therefore we may assume that $\mathsf p_n(A'')=\mathsf p_n(X_1X_2)$, $\mathsf p_n(X_1)\neq 1$, and that $\mathsf p_n(X_2)\neq 1$. Let
\[
\mathsf p_n(X_1) =\epsilon_1 q_{n,1}q_{n,2}^{r_2} \cdot \ldots \cdot q_{n,s_n}^{r_n} \quad \text{and} \quad
\mathsf p_n(X_2) =\epsilon_2 q_{n,1}q_{n,2}^{s_2} \cdot \ldots \cdot q_{n,s_n}^{s_n}
\]
with $r_i+s_i\ge 4$ for each $i\in [2,n]$. Let
\begin{align*}
\mathsf c_n&=\epsilon_1 q_{n,1}q_{n,2}^{r_2'} \cdot \ldots \cdot q_{n,s_n}^{r_n'}\,, \text{ with }
 r_i'=\left\{
 \begin{aligned}
& r_i &\text{ if $r_i\le 2$ ,}\\
& r_i-2 &\text{ if $r_i\ge 3$ ,}
 \end{aligned}\right. \text{ for each $i\in [2,n]$}\,,\\
\mathsf d_n&=\epsilon_2 \epsilon^{-1} q_{n,1}^2q_{n,2}^{s_2'} \cdot \ldots \cdot q_{n,s_n}^{s_n'}\,,
\text{ with }
 s_i'=\left\{
 \begin{aligned}
& s_i-1 &\text{ if $r_i\le 2$ ,}\\
& s_i+1 &\text{ if $r_i\ge 3$ ,}
 \end{aligned}\right. \text{ for each $i\in [2,n]$}\,.
\end{align*}
Therefore $c_nd_n=a_nb_n$. Let $X_1'=X_1 \mathsf p_n(X_1)^{-1}c_n$ and $X_2'=X_2 \mathsf p_n(X_2)^{-1}d_n$. Then $X_1'$ is an atom, $X_2$ is an atom or a product of two atoms, and $A=X_1'X_2'X_3 \cdot \ldots \cdot X_{m}$ which implies that $m\ge l-1$ and hence $\max \Delta \big( \mathsf L (A'') \big)\ge m-2\ge l-3=\max\Delta \big( \mathsf L (A) \big)-1$.
\end{proof}

\medskip
Recall that for an atomic but non-factorial monoid $H$ we have $2 + \sup \Delta (H) \le \mathsf c (H)$. In general, this inequality can be strict (even for numerical monoids;  see Examples \ref{3.4}.2).
Suppose $H$ is a Krull monoid with finite class group $G$ and suppose that every class contains a minimal prime ideal. Then $\Delta (H) = \emptyset$ if and only if $|G| \le 2$, and if $G$ is nontrivial with $\mathsf D (G) = \mathsf D^* (G)$, then $2 + \max \Delta (H) = \mathsf c (H)$ (\cite[Corollary 4.1]{Ge-Gr-Sc11a}).

Now let
$H$ be a weakly Krull monoid as in Theorem \ref{1.1} but not Krull, whence $H$ is seminormal $v$-noetherian with nontrivial conductor, all localizations $H_{\mathfrak p}$ are finitely primary, and every class of the $v$-class group contains a minimal prime ideal $\mathfrak p \in \mathfrak X (H)$ with $\mathfrak p \not\supset \mathfrak f$.  If $G$ is trivial, then $H_{\red}$ is isomorphic to the monoid $\mathcal I_v^* (H)$ of $v$-invertible $v$-ideals and $2 + \max \Delta (H) = \mathsf c (H) \in \{2,3\}$ (this will be outlined in detail in the proof of Theorem \ref{1.1}). The next theorem provides a detailed analysis of the case where $|G|=2$.

\medskip
\begin{theorem} \label{3.3}
Let $H$ be a seminormal $v$-noetherian weakly Krull monoid, $\widehat H$ its complete integral closure,  $\emptyset \ne \mathfrak f = (H \DP \widehat H) \subsetneq H$ its conductor,
 $\mathcal P^* = \{\mathfrak p_1, \ldots, \mathfrak p_n\} = \{\mathfrak p \in \mathfrak X (H) \mid \mathfrak p \supset \mathfrak f \}$, and $\mathcal P = \mathfrak X (H) \setminus \mathcal P^*$.
 Suppose that  $H_{\mathfrak p_i}$ is finitely primary of rank $s_i$ for all $i \in [1,n]$,   that the $v$-class group $G=\mathcal C_v (H)$ has two elements, and that each class contains some $\mathfrak p \in \mathcal P$.

Then $\Delta (H)$ is an interval with $2+\max \Delta (H) = \mathsf c (H)$, and either $\Delta (H)=\emptyset$ or $\min \Delta (H)=1$. Moreover,  setting $G = \{0, e\}$,
\[
D_{\nu} = (H_{\mathfrak p_{\nu}})_{\red} \,, \quad  \widehat{D_{\nu}} = \widehat{D_{\nu}}^{\times} \time [q_{{\nu},1}, \ldots, q_{{\nu}, s_{\nu}}]  \,, \quad \text{and} \quad G_{\nu} = \{[\epsilon] \mid \epsilon \in \widehat{D_{\nu}}^{\times} \} \quad \text{for all} \quad {\nu} \in [1,n] \,,
\]
we have
\[
\max \Delta (H) = \max \big(\{d_{\nu}+d_{\nu'} \mid \nu, \nu' \in [1,n] \text{ with }  \nu \neq \nu' \}\cup\{|d_{\nu}|\mid \nu \in [1,n]\}\big) \quad \text{where}
\]

\[
d_{\nu}=\left\{
\begin{aligned}
2&, \qquad&& \text{ if ${G_{\nu}}=\{0\}$ and $s_{\nu}=|\{i\in [1,s_{\nu}]\mid [q_{\nu,i}]=e\}|=2$\,, }\\
0&,&& \text{ if $G_{\nu}=\{0\}$ and  $s_{\nu}=1$; note that $[q_{\nu,1}]=e$ or $|\widehat{D_{\nu}}^{\times}|>1$, since $D_{\nu}$ is not factorial, }\\
-1&,&& \text{ if $G_{\nu}=\{0\}$, $s_{\nu}\ge 2$, and $|\{i\in [1,s_{\nu}]\mid [q_{\nu,i}]=e\}|=0$\,, }\\
1&, && \text{ if $G_{\nu}\neq\{0\}$, or \big($s_{\nu}\ge 2$ and $|\{i\in [1,s_{\nu}]\mid [q_{\nu,i}]=e\}|=1$\big),}\\
&&& \qquad\qquad\qquad\text{ or \big($s_{\nu}\ge 3$ and $|\{i\in [1,s_{\nu}]\mid [q_{\nu,i}]=e\}|\ge 2$\big)}\,.
\end{aligned}\right.
\]
\end{theorem}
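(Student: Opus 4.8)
The plan is to reduce everything to the combinatorial model from Proposition \ref{2.2} and then to combine Proposition \ref{3.2} with a direct computation of the maximum distance in the rank-$\le 2$ local pieces. First I would invoke the structure theory of seminormal $v$-noetherian weakly Krull monoids quoted in Section \ref{2}: since $\mathfrak f \ne \emptyset$, $\widehat H$ is Krull, each $H_{\mathfrak p}$ for $\mathfrak p \in \mathcal P^*$ is seminormal finitely primary of rank $s_{\mathfrak p}$, and (by the divisor-theoretic description of weakly Krull monoids, cf. \cite[Section 5]{Ge-Ka-Re15a}) $H_{\red}$ has a combinatorial realization $H_{\red} \cong B = \mathcal B(G, T, \iota)$ with $T = D_1 \times \ldots \times D_n$, where $D_\nu = (H_{\mathfrak p_\nu})_{\red}$, and with the primes over $\mathcal P$ distributed over \emph{all} classes of $G$ by hypothesis. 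By Proposition \ref{2.2}.1 the transfer homomorphism $\boldsymbol\beta$ gives $\Delta(H) = \Delta(B)$ and $\mathsf c(H) = \mathsf c(B)$, so I may work entirely inside $B$ with $|G| = 2$, $G = \{0,e\}$. This is exactly the setting fixed at the beginning of Section \ref{3}.

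Next I would establish that $\Delta(B)$ is an interval and that $2 + \max\Delta(B) = \mathsf c(B)$. The interval property: $\Delta(G) = \Delta(C_2) = \emptyset$, so I cannot get it for free from Proposition \ref{2.3}; instead I would apply Proposition \ref{3.2} repeatedly. If $\max\Delta(\mathsf L(A)) \ge \max\Delta(G) + 2 = 2$ for some $A \in B$, Proposition \ref{3.2} (the hypothesis $|\{i : s_i > 1\}| \le 1$ or elementary $2$-group is satisfied here since $G \cong C_2$ is an elementary $2$-group) produces $A'$ with strictly smaller norm and $\max\Delta(\mathsf L(A')) \ge \max\Delta(\mathsf L(A)) - 1$. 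Descending on $\norm{\cdot}$, one eventually reaches an element realizing distance $1$; hence every value in $[1, \max\Delta(B)]$ is attained, i.e. $\Delta(B)$ is an interval (and $\min\Delta(B) = 1$ unless $\Delta(B) = \emptyset$). For the catenary-degree identity, the inequality $2 + \sup\Delta(B) \le \mathsf c(B)$ always holds; for the reverse I would use Lemma \ref{2.1} (so that each $D_\nu$ has $\mathsf c(D_\nu) \le 3$) together with the fact that over an elementary $2$-group the relevant "prime-swapping" moves in Lemma \ref{3.1} cost at most distance $\max\Delta(B)$, bounding $\mathsf c(B) \le \max\Delta(B) + 2$.

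Finally I would compute $\max\Delta(B)$ explicitly as the stated maximum over $\{d_\nu + d_{\nu'}\} \cup \{|d_\nu|\}$. The idea: by the reductions in Proposition \ref{3.2}, a distance-maximizing relation $A = U_1\cdots U_k = V_1\cdots V_l$ can be taken with $k = 2$ (CASE 5) and with each $U_j$ having $\mathsf p_\nu(U_j) \in \mathcal A(D_\nu) \cup \{1\}$; thus $A = U_1 U_2$ where the $D_\nu$-components $a_\nu, b_\nu$ of $U_1, U_2$ are atoms of $D_\nu$, and factoring $A$ differently amounts to re-factoring the products $a_\nu b_\nu$ inside $B$ while respecting the single $C_2$-constraint. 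The quantity $d_\nu$ is precisely the maximal "length gain" one can extract from the $\nu$-th coordinate: a careful case analysis on $G_\nu$, on $s_\nu$, and on how many $q_{\nu,i}$ lie in the nonzero class $e$ — mirroring CASES 5.1, 5.2.1, 5.2.2 of Proposition \ref{3.2} — shows $\max\mathsf L_B(a_\nu b_\nu) \le$ (a constant) $+ d_\nu$ with equality achievable, and that at most two coordinates (or one, if a single coordinate supplies a long run of exponents) can be exploited simultaneously because the $C_2$-constraint forces the leftover pieces to pair up. I expect \textbf{this last computation of $d_\nu$ to be the main obstacle}: it requires enumerating the possible atom shapes in a seminormal finitely primary monoid of rank $s_\nu$ (using Lemma \ref{2.1}.1), tracking which residues in $G = \{0, e\}$ the units $\widehat{D_\nu}^\times$ and the primes $q_{\nu,i}$ realize, and checking in each subcase both the upper bound on achievable lengths and the construction of an explicit $A$ (of the form $q_{\nu,1}^{*}\cdots q_{\nu,s_\nu}^{*}\cdot(-[q_{\nu,s_\nu}])$ or a two-coordinate analogue) that attains it; the bookkeeping is delicate but entirely parallel to the constructions already carried out in the proof of Proposition \ref{3.2}.
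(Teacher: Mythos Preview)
Your reduction to the $T$-block monoid $B$ and your use of Proposition \ref{3.2} to show that $\Delta(B)$ is an interval (and hence $\min\Delta(B)=1$ when nonempty) are exactly what the paper does. The gap is in the remaining two claims.

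First, the inequality $\mathsf c(B)\le \max\Delta(B)+2$ is not established by the vague ``prime-swapping'' remark. Lemma \ref{3.1} only tells you that a single swap changes the number of factors by at most one; it says nothing about the \emph{distance} between the two resulting factorizations, and $\mathsf c(D_\nu)\le 3$ for the local pieces does not by itself bound $\mathsf c(B)$ in terms of $\Delta(B)$. The paper obtains this inequality by a completely different route: it chooses $A\in B$ of \emph{minimal norm} with $\mathsf c(A)=\mathsf c(B)$ and factorizations $z_1,z_2$ with no $(\mathsf c(B)-1)$-chain and $|z_1|+|z_2|$ maximal, and then proves an assertion (A1) that $|z_1|=2$. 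From $|z_1|=2$ one immediately reads off $\min\bigl(\mathsf L(A)\setminus\{2\}\bigr)\ge \mathsf c(B)$, whence $\max\Delta(B)+2\ge \mathsf c(B)$.

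Second, your claim that ``by the reductions in Proposition \ref{3.2} a distance-maximizing relation can be taken with $k=2$'' misreads that proposition. Proposition \ref{3.2} only guarantees $\max\Delta(\mathsf L(A'))\ge \max\Delta(\mathsf L(A))-1$ in each reduction step, so you may lose the maximum; and it does not apply at all when $\max\Delta(B)\le 1$, cases which the $d_\nu$-formula must still cover. In the paper the reduction to $k=2$ is the assertion A1 just mentioned, proved for a minimal catenary witness by a separate chain-concatenation argument (finding a common atom $W$ dividing products of all but one of the $U_i$ and all but one of the $V_j$). Once $k=2$ is secured, the paper proves two further assertions (A2: $\min\mathsf L(a)\le 2$ or $3$ for $a\in D_j\cap B$; A3: the bound $\min(\mathsf L(ab)\setminus\{2\})\le d_i+d_j+2$ for the three possible atom configurations of $U_1,U_2$) which make the upper bound precise, and then five short cases give the matching lower-bound constructions. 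Your identification of the $d_\nu$ computation as ``the main obstacle'' is right, but the actual obstacle sits earlier: you need the $k=2$ argument (A1) before any of the local bookkeeping can start.
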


\medskip
\medskip
\begin{proof}[Proof of Theorem \ref{1.1} and of Theorem \ref{3.3}]
Let $H$ be a $v$-noetherian weakly Krull monoid as in the formulation of  Theorem \ref{1.1} and of Theorem \ref{3.3}. We proceed in five steps. First, we show that it is sufficient to consider a special class of weakly Krull monoids. Second, we handle the special cases where the $v$-class group is either trivial or infinite, which settles the first statement of Theorem \ref{1.1}. In the third step we prove the second statement of Theorem \ref{1.1}, and in the fourth step we show that $\Delta (H)=\emptyset$ or $\min \Delta (H)=1$. Finally we prove Theorem \ref{3.3}. We use all the notation introduced at the beginning of this section.

\medskip
\noindent
{\bf  1. Reduction to a special case.}  Let $\mathcal H = \{aH \mid a \in H\}$ be the monoid of principal ideals,
$\mathcal I_v^* (H)$ be the monoid of $v$-invertible $v$-ideals of $H$, $\delta_H \colon H_{\red} \to \mathcal I_v^* (H)$ be the canonical monomorphism satisfying $\delta_H (H_{\red}) = \mathcal H$, and $\mathcal C_v (H) = \mathsf q \big( \mathcal I_v^* (H) \big)/\mathsf q ( \mathcal H )$ be  the $v$-class group. We
set $\mathfrak f = (H \DP \widehat H)$, $\mathcal P^* = \{ \mathfrak p \in \mathfrak X(H) \mid \mathfrak p \supset \mathfrak f \}$, and $\mathcal P = \mathfrak X (H) \setminus \mathcal P^*$. By assumption, we have $\mathfrak f \ne \emptyset$. If $\mathfrak f = H$, then $H = \widehat H$ is Krull, and all statements of Theorem \ref{1.1} hold by Proposition \ref{2.3}. Thus we suppose that $\mathfrak f \subsetneq H$ whence $\mathcal P^*$ is finite and non-empty, say $\mathcal P^*  = \{\mathfrak p_1, \ldots, \mathfrak p_n\}$ with $n \in \N$.
By \cite[Theorem 5.5]{Ge-Ka-Re15a}, there exists an isomorphism
\[
\chi \colon \mathcal I_v^*(H) \to D = \mathcal F(\mathcal P) \time (H_{\mathfrak p_1})_{\red} \time \ldots \time (H_{\mathfrak p_n})_{\red}
\]
where $\chi \t \mathcal P = \id_{\mathcal P}$ and, for all $i \in [1,n]$, $D_i := (H_{\mathfrak p_i})_{\red}$ is a reduced seminormal finitely primary monoid, say of rank $s_i$, which is not factorial.
Hence \,$\chi \circ \delta_H \colon H_\red \to \mathcal H = \{aH
\mid a \in H \} \hookrightarrow \mathcal I_v^*(H) \to D$\, induces an isomorphism \,$H_\red \to H^*$, where $H^* \subset D$ is a  cofinal saturated submonoid, and there is a natural isomorphism \,$\overline \chi \colon \mathcal C_v(H) \to \mathsf q (D)/\mathsf q (H^*) = G$ mapping classes of primes onto classes of primes (use \cite[Lemma 4.1]{Ge-Ka-Re15a}). Thus we may assume from now on that $H = H^* \subset D$ is a cofinal saturated submonoid with class group $G = \mathsf q (D)/\mathsf q (H)$.

By Proposition \ref{2.2}, it is sufficient to prove the assertion for the associated $T$-block monoid
\[
B = \mathcal B (G, T, \iota) \subset F = \mathcal F (G) \time T \,,
\]
where $T = D_1 \times \ldots \times D_n$ and $\iota \colon T \to G$ is defined by $\iota (t) = [t]$ for all $t \in T$ (note that, again by Proposition \ref{2.2}, $B$ is seminormal $v$-noetherian weakly Krull with non-trivial conductor and class group isomorphic to $G$). Since $\mathcal B (G) \subset B$ is a divisor-closed submonoid, it follows that  $\Delta  (G)  \subset \Delta (B)$, and in case $|G|\ge 3$, Proposition \ref{2.3} implies that $\Delta (G)$ is an interval with $\min \Delta (G)=1$.

\medskip
\noindent
{\bf  2. Proof of Theorem \ref{1.1}.1.}

Suppose that $|G|=1$. Then $B=F$ and $\mathsf c (B) = \mathsf c (F) = \max \{ \mathsf c (D_1), \ldots, \mathsf c (D_n\}$. Thus Lemma \ref{2.1} implies that $\mathsf c (B) \le 3$ and hence $\Delta (B) \subset \{1\}$.

Suppose that $G$ is infinite. Then $\Delta (G) = \N$ by \cite[Theorem 7.4.1]{Ge-HK06a} and hence $\N \subset \Delta (G)  \subset \Delta (B) \subset \N$.

\medskip
\noindent
{\bf  3. Proof of Theorem \ref{1.1}.2.}

Suppose that $G$ is finite with $|G|>1$, and that either $|\{i \in [1,n] \mid s_i > 1 \}| \le 1$ or that $G$ is an elementary $2$-group.
Lemma \ref{2.1} implies that $\mathsf c (F) = \max \{ \mathsf c (D_1), \ldots , \mathsf c (D_n) \} \le 3$.  Since $B \subset F$ is cofinal saturated with finite class group, the finiteness of $\mathsf c (F)$ implies that $\mathsf c (B) < \infty$
 by \cite[Theorems 3.6.4 and 3.6.7]{Ge-HK06a}. Therefore $\Delta (B)$ is finite and it is sufficient to show that $[1, \max \Delta (B)] \subset \Delta (B)$. We set $m = \max \Delta (B)$, and we use the convention that $\max \Delta (G)=0$ if $\Delta (G)= \emptyset$ (which is the case for $|G|=2$).

We assert that  for each $d \in [\max \Delta (G)+1, m]$, there are $A_d, \ldots, A_m \in B$ such that $\norm{A_d} < \ldots < \norm{A_m}$,  $\max \Delta \big( \mathsf L (A_i) \big) = i$ for each $i \in [d, m]$, and that $\norm{A_i}$ is minimal among all $\norm{A_i'}$ with $A_i' \in B$ and $\max \Delta \big( \mathsf L (A_i') \big) = i$.
This implies that $[d, m] \subset \Delta (B)$ for each $d \in [\max \Delta (G)+1, m]$, and hence
\[
\Delta (B) = \Delta (G) \cup [\max \Delta (G)+1, \max \Delta (B)]
\]
is an interval with $\min \Delta (B)=1$.

We proceed by induction on $d$.
Clearly, the assertion holds for $d=m$. Suppose it holds for some $d \in [\max \Delta (G)+2, \max \Delta (B)]$. By Proposition \ref{3.2} there is an $A_{d-1} \in B$ with $\norm{A_{d-1}} < \norm{A_d}$ and $\max \Delta \big( \mathsf L (A_{d-1}) \big) \ge \max \Delta \big( \mathsf L (A_d) \big) - 1$. The minimality of $\norm{A_d}, \ldots, \norm{A_m}$ implies that $\max \Delta \big( \mathsf L (A_{d-1}) \big) = \max \Delta \big( \mathsf L (A_d) \big) - 1 = d-1$, and hence the assertion follows.

\medskip
\noindent
{\bf 4.} Suppose that $\Delta (B)\ne \emptyset$. We have to verify that $\min \Delta (B)=1$. If $G$ is trivial or infinite, then this follows from  {\bf 2}. If $G$ is finite with $|G| \ge 3$, then $1 \in \Delta (G) \subset \Delta (B)$ by Proposition \ref{2.3}. If $|G|=2$, then $G$ is an elementary $2$-group and the assertion follows from {\bf 3}.

\medskip
\noindent
{\bf 5. Proof of Theorem \ref{3.3}.}

Suppose that $|G|=2$, say $G=\{0,e\}$, and, as in the formulation of Theorem \ref{3.3}, we set $G_{\nu} = \{[\epsilon] \mid \epsilon \in \widehat{D_{\nu}}^{\times} \}$ for all $\nu \in [1,n]$. Note that $\max \mathsf L_F (A) \le 2$ for all $A \in \mathcal A (B)$.
By definition of the catenary degree, there  are $A\in B$ with $\mathsf c (A)=\mathsf c (B)$ and  two factorizations
\begin{equation*}
z_1 =U_1 \cdot \ldots \cdot U_k \in \mathsf Z_B (A) \quad \text{ and} \quad  z_2 =V_1\cdot \ldots \cdot V_l \in \mathsf Z_B (A) \,,
\end{equation*}
where $k\le l$ and $U_1, \ldots, U_k, V_1, \ldots V_l \in \mathcal A (B)$ such that   there is no $(\mathsf c (B)-1)$-chain between $z_1$ and $z_2$.  First we  choose an element $A \in B$ such that $\norm{A}$ is minimal with respect to this property,  and then we choose factorizations $z_1,z_2\in \mathsf Z_B(A)$ such that $|z_1|+|z_2|=k+l$ is maximal with the property that there is no $(\mathsf c (B)-1)$-chain between $z_1$ and $z_2$. Since $\mathcal B (G)$ is factorial and $B$ is not factorial, it follows that $\mathsf c (G)=0 < \mathsf c (B)$. Thus $A \notin \mathcal B (G)$, $|\mathsf Z_B (A)|>1$, and hence there exists an $i\in [1,n]$  such that $\mathsf p_i(A)\neq 1$, say $i=1$.

We start with three assertions {\bf A1}, {\bf A2}, {\bf A3}, and then distinguish five cases.

\medskip
\noindent
{\bf  A1.}  \, $k=2$.

\smallskip
\noindent
{\it Proof of {\bf  A1}.} \, Assume to the contrary that $k\ge 3$. After renumbering if necessary, we may suppose $\mathsf p_1(A)=\epsilon q_{1,1}^{k_1}\cdot \ldots \cdot q_{1,r}^{k_r}q_{1,r+1}^{k_{r+1}}\cdot \ldots \cdot q_{1,{s_1}}^{k_{s_1}}$, where $\epsilon \in \widehat{D_1}^{\times}$, $r\in [0,{s_1}]$, $[q_{1,1}]=\ldots [q_{1,r}]=e$, and $[q_{1,r+1}]=\ldots [q_{1,{s_1}}]=0$.

Suppose that there exist an atom $W\in \mathcal A(B)$, $i_0\in [1,k]$, and $j_0\in [1,l]$ such that $W\t \prod_{i\neq i_0}U_i$ and $W\t \prod_{j\neq j_0}V_j$. Let $A=WU_{i_0}X_1\cdot \ldots \cdot X_{m_1}=WV_{j_0}Y_1\cdot \ldots \cdot Y_{m_2}$, where $X_1,\ldots, X_{m_1}, Y_1,\ldots ,Y_{m_2}\in \mathcal A(B)$. By the minimality of $\norm{A}$, we obtain that there are  $(\mathsf c (B)-1)$-chains between $\prod_{i\neq i_0}U_i$ and $WX_1\cdot \ldots \cdot X_{m_1}$, between $U_{i_0}X_1\cdot \ldots \cdot X_{m_1}$ and  $V_{j_0}Y_1\cdot \ldots \cdot Y_{m_2}$, between   $WY_1\cdot \ldots \cdot Y_{m_2}$ and $\prod_{j\neq j_0}V_j$. Then there is an $(\mathsf c (B)-1)$-chain between $z_1$ and $z_2$, a contradiction.

Therefore we only need to find such a $W$ to get a contradiction.

In fact, if $\mathsf p_1(A)$ is an atom of $D_1$, then $A'=A(\mathsf p_1(A))^{-1}[\mathsf p_1(A)]$ has the definining properties of $A$ but $\norm{A'}<\norm{A}$, a contradiction to the minimality of $\norm{A}$. Therefore $\max \mathsf L_F(\mathsf p_1(A))\ge 2$. If there exists an $\epsilon'\in \widehat{D_1}^{\times}$ such that $\epsilon'q_{1,1}\cdot \ldots \cdot q_{1,{s_1}}\in \mathcal A(B)$, then there exist distinct $i_1,i_2\in [1,k]$ and distinct $j_1,j_2\in [1,l]$ such that $\epsilon'q_{1,1}\cdot \ldots \cdot q_{1,{s_1}}\t U_{i_1}U_{i_2}$ and $\epsilon'q_{1,1}\cdot \ldots \cdot q_{1,{s_1}}\t V_{j_1}V_{j_2}$, and the atom $W=\epsilon'q_{1,1}\cdot \ldots \cdot q_{1,{s_1}}$ has the required property.

Thus we may suppose that $G_1=\{0\}$ and $[q_{1,1}\cdot \ldots \cdot q_{1,{s_1}}]=e$ which implies that $r\ge 1$. We distinguish two cases.

\medskip
\noindent
{ CASE 1: } \, $\max \mathsf L_F(\mathsf p_1(A))=2$\,.
\smallskip

After renumbering if necessary we may suppose that $\mathsf p_1(A)\t U_1U_2$ and $\mathsf p_1(A)\t V_1V_2$. If there exists $i\in [1,r]$  such that $q_{1,i}q_{1,1}\cdot \ldots \cdot q_{1,{s_1}}\t \mathsf p_1(A)$, then we choose $W=q_{1,i}q_{1,1} \cdot \ldots \cdot q_{1,{s_1}}$. Otherwise, it follows that  $k_1=\ldots =k_r=2$, hence  $\mathsf p_1(A)\in \mathcal A(B)$, and we choose $W=\mathsf p_1(A)$.

 \medskip
\noindent
{CASE 2: } \, $\max \mathsf L_F(\mathsf p_1(A))\ge 3$\,.
 \smallskip

 Then $\mathsf p_1(A)\neq \mathsf p_1(U_i)$ for any $i\in [1,k]$ because $\mathsf D(G)=2$.
Without loss of generality, we may suppose that  $\mathsf p_1(U_1)\neq 1$ and $\mathsf p_1(U_2)\neq 1$. We assert that $\mathsf L(U_1U_2)=\{2\}$, and assume to the contrary that $U_1U_2=W_1\cdot \ldots \cdot W_{x}$ where $x\ge 3$ and $W_i\in \mathcal A(B)$ for each $i\in [1,x]$. Then there are  $(\mathsf c (B)-1)$-chains between $U_1\cdot \ldots \cdot U_{k-1}$ and $W_1\cdot \ldots \cdot W_xU_3\cdot \ldots \cdot U_{k-1}$ by the minimality of $\norm{A}$ and there is a  $(\mathsf c (B)-1)$-chains  between $W_1\cdot \ldots \cdot W_xU_3\cdot \ldots \cdot U_{k}$ and  $z_2$ by the maximality of $k+l$. It follows that  there is a $(\mathsf c (B)-1)$-chain between $z_1$ and $z_2$, a contradiction. Thus $\mathsf L (U_1U_2)=\{2\}$.

If $U_1\neq \mathsf p_1(U_1)$ and $U_2\neq \mathsf p_1(U_2)$, then $\mathsf p_1(U_1)\mathsf p_1(U_2)\in \mathcal A(B)$ by $\mathsf L(U_1U_2)=\{2\}$.  Let $U_1U_2=W_1W_2$ with $W_1=\mathsf p_1(U_1)\mathsf p_1(U_2)$. Then there is no $(\mathsf c (B)-1)$-chain between $W_1W_2U_3\cdot \ldots \cdot U_k$ and $V_1\cdot \ldots \cdot V_l$.
Thus we  always may suppose that $U_1=\mathsf p_1(U_1)$ and hence there exists $i\in [1,r]$, say $i=1$, such that $\mathsf v_{q_{1,1}}(U_1)\ge 2$ by $G_1=\{0\}$ and $[q_{1,1}\cdot \ldots \cdot q_{1,{s_1}}]=e$. Therefore $W'=q_{1,1}^2q_{1,2}\cdot \ldots \cdot q_{1,{s_1}}\t U_1\mathsf p_1(U_2)\t U_1U_2$.

 With the same reason and without loss of generality, we   always may suppose that $V_1=\mathsf p_1(V_1)$ and $\mathsf p_1(V_2)\neq 1$.
Then there exists $j\in [1,r]$ such that $W''=q_{1,j}q_{1,1}\cdot \ldots \cdot q_{1,{s_1}}\t V_1\mathsf p_1(V_2)\t V_1V_2$. If $j=1$, then $W'=W''\in \mathcal A(B)$ and  we are done by choosing $W=W'$.  Thus we assume that $j\neq 1$.  After renumbering if necessary we may suppose that $\mathsf p_1(U_i)\neq 1$ for each $i\in [1,k_0]$ and $\mathsf p_1(U_i)= 1$  for each $i\in [k_0+1,k]$ where $k_0\in [2,k]$. After renumbering if necessary we may suppose that $\mathsf p_1(V_i)\neq 1$ for each $i\in [1,l_0]$ and $\mathsf p_1(V_i)= 1$  for each $i\in [l_0+1,l]$ where $l_0\in [2,l]$.
If there exist distinct $j_1,j_2\in [1,l_0]$ such that $W'\t V_{j_1}V_{j_2}$, then we are done by choosing $W=W'$. Otherwise $l_0\ge k_0$. Then there must exist distinct $i_1,i_2\in [1,k_0]$ such that $W''\t U_{i_1}U_{i_2}$ and we are done by choosing $W=W''$. \qed(Proof of {\bf  A1})

\bigskip
If $\mathsf L(A)=\{2\}$, then $\max \Delta(B)+2  \le \mathsf c(B)= \mathsf c (A) = 2$  whence $\Delta (B) = \emptyset$ and $\max \Delta (B)=0=\mathsf c (B)-2$. Suppose that $\mathsf L(A)\neq\{2\}$, say
\begin{equation}\label{equation1}
A=U_1U_2=W_1\cdot \ldots \cdot W_{m}
\end{equation}
where $m=\min \big(\mathsf L(A)\setminus\{2\}\big)$ and $W_1, \ldots, W_{m} \in \mathcal A(B)$. Since $m+l>2+l$, we obtain that there is an $(\mathsf c (B)-1)$-chain between $z_2$ and $W_1\cdot \ldots \cdot W_{m}$. Therefore there is no $(\mathsf c (B)-1)$-chain between $U_1U_2$ and $W_1\cdot \ldots \cdot W_{m}$. It follows that $\max\Delta(B)+2\ge m\ge \mathsf c(B)\ge \max\Delta(B)+2$ and hence  $\max\Delta(B)=m-2=\mathsf c (B)-2$.  Since $G$ is an elementary $2$-group, Theorem \ref{1.1}.2 implies that  $\Delta(B)$ is an interval.

We set $m=2$ in case $\mathsf L (A)=\{2\}$, and then it remains to prove that
\begin{equation}\label{equation2}
m-2 = \max \big(\{d_{\nu}+d_{\nu'} \mid \nu, \nu' \in [1,n] \text{ with }  \nu \neq \nu' \}\cup\{|d_{\nu}|\mid \nu \in [1,n]\}\big)\,.
\end{equation}

For every  $j\in [1,n]$ we may suppose,  after renumbering if necessary,  that $[q_{j,i}]=e$ for each $i\in [1,s_j']$ and $[q_{j,i}]=0$ for each $i\in [s_j'+1,s_j]$ where $s_j'\in [0,s_j]$.

\medskip
\noindent
{\bf A2.} Let $j \in [1,n]$.
\begin{enumerate}
\item If $s_j=1$, $a,b\in D_j\cap\mathcal A( B)$, then there exists $\epsilon\in \widehat{D_j}^{\times}$ such that $b=\epsilon a$.

\item  If $s_j\ge 2$ and $a\in D_j\cap B$, then
 \begin{align*}
\min \mathsf L(a)\le \left\{
\begin{aligned}
&3,\quad  &&\text{ if $s_j=s_j'=2$ and $G_j=\{0\}$\ i.e. $d_j=2$\,,}\\
&2, &&\text{ otherwise.}\\
\end{aligned}\right.
\end{align*}
\end{enumerate}

\smallskip
\noindent
{\it Proof of {\bf A2}. } \,
We prove the assertion for $j=1$ and set $s'= s_1'$.

\smallskip
1. If $[q_{1,1}]=0$, then $a=\epsilon_1 q_{1,1}$, $b=\epsilon_2 q_{1,1}$, where $\epsilon_1,\epsilon_2\in \widehat{D_1}^{\times}$ and hence the assertion follows. Thus we assume that $[q_{1,1}]=e$.  If $G_1=\{0\}$, then $a=\epsilon_1 q_{1,1}^2$, $b=\epsilon_2 q_{1,1}^2$, where $[\epsilon_1]=[\epsilon_2]=0$ and hence the assertion follows.   If $G_1\neq \{0\}$ , then $a=\epsilon_1 q_{1,1}$, $b=\epsilon_2 q_{1,1}$, where $[\epsilon_1]=[\epsilon_2]=e$ and hence the assertion follows.
\smallskip

2. Suppose $s'=2$ and $G_1=\{0\}$. If $s_1=2$, we assume to the contrary that $\min \mathsf L(a)\ge 4$ and  let $a=\epsilon q_{1,1}^{k_1} q_{1,2}^{k_{2}}$ with $k_i\ge 4$ for each $i\in [1,2]$. Then $k_1+k_2$ is even. If $k_1$ is even, then $\epsilon q_{1,1}q_{1,2}^{k_2-1}$ and $q_{1,1}^{k_1-1}q_{1,2}$ are two atoms of $B$. Thus $\min \mathsf L(a)\le 2$, a contradiction.  If $k_1$ is odd, then $\epsilon q_{1,1}q_{1,2}$, $q_{1,1}q_{1,2}^{k_2-2}$ and $q_{1,1}^{k_1-2}q_{1,2}$ are three atoms of $B$. Thus $\min \mathsf L(a)\le 3$, a contradiction.
If $s_1\ge 3$, we let $a=\epsilon q_{1,1}^{k_1}\cdot \ldots \cdot q_{1,s_1}^{k_{s_1}}$ with $k_i\ge 2$ for each $i\in [1,s_1]$. Then $k_1+k_2$ is even. Since $\epsilon q_{1,1}q_{1,2}q_{1,3}^{k_3-1}\cdot \ldots \cdot q_{1,s_1}^{k_{s_1}-1}$ and $q_{1,1}^{k_1-1}q_{1,2}^{k_2-1}q_{1,3}\cdot \ldots \cdot q_{1,s_1}$ are two atoms of $B$. Thus $\min \mathsf L(a)\le 2$.

For the other cases, we have that $G_1\neq \{0\}$, or $s'\ge 3$, or $s'\le 1$.

If $G_1\neq \{0\}$, then let $a=\epsilon q_{1,1}^{k_1}\cdot \ldots \cdot q_{1,s_1}^{k_{s_1}}$ with $k_i\ge 2$ for each $i\in [1,s_1]$. There exists $\epsilon'\in \widehat{D_1}^{\times}$ such that $\epsilon'q_{1,1}q_{1,2}^{k_2-1}\cdot \ldots \cdot q_{1,s_1}^{k_{s_1}-1}$ and $\epsilon\epsilon'q_{1,1}^{k_1-1}q_{1,2}\cdot \ldots \cdot q_{1,s_1}$ are two atoms of $B$ and hence $\min \mathsf L(a)\le 2$. Then we always assume that $G_1= \{0\}$.

 If $s'\ge 3$, we assume to the contrary that $\min \mathsf L(ab)\ge 3$ and  let $a=\epsilon q_{1,1}^{k_1}\cdot \ldots \cdot q_{1,s_1}^{k_{s_1}}$ with $k_i\ge 3$ for each $i\in [1,s_1]$. We choose $\delta\in [1,2]$ such that $q_{1,1}q_{1,2}^{k_2-1}q_{1,3}^{\delta}q_{1,4}^{k_4-1}\cdot \ldots \cdot q_{1,s_1}^{k_{s_1}-1}$ is an atom of $B$ and hence $\epsilon q_{1,1}^{k_1-1}q_{1,2}q_{1,3}^{k_3-\delta}q_{1,4}\cdot \ldots \cdot q_{1,s_1}$ is also an atom of $B$. Therefore $\min\mathsf L(a)\le 2$, a contradiction.

  Now we assume that $s'\le 1$ (note $s_1\ge 2$) and $a=\epsilon q_{1,1}^{k_1}\cdot \ldots \cdot q_{1,s_1}^{k_{s_1}}$ with $k_i\ge 2$ for each $i\in [1,s_1]$. Suppose that $s'=1$. Then $k_1$ is even. If $k_1=2$, then $a$ is an atom of $B$. Otherwise $k_1\ge 4$. It follows that $ q_{1,1}^{2}q_{1,2}^{k_2-1}\cdot \ldots \cdot q_{1,s_1}^{k_{s_1}-1}$ and $\epsilon q_{1,1}^{k_1-2}q_{1,2}q_{1,3}\cdot \ldots \cdot q_{1,s_1}$ are two atoms of $B$. Hence $\min \mathsf L(a)\le 2$. Suppose that $s'=0$. Then $\epsilon q_{1,1}q_{1,2}^{k_2-1}\cdot \ldots \cdot q_{1,s_1}^{k_{s_1}-1}$ and $q_{1,1}^{k_1-1}q_{1,2}q_{1,3}\cdot \ldots \cdot q_{1,s_1}$ are two atoms of $B$. Thus $\min \mathsf L(a)\le 2$. \qed(Proof of {\bf A2})

\medskip
\noindent
{\bf A3.} Let $a,b\in \mathcal A(B)$.
\begin{enumerate}
\item Suppose that  there exists $i\in [1,n]$ such that $a\in D_i$ and $\mathsf p_i(b)\neq 1$. If $s_i=1$, then $\mathsf L(ab)=\{2\}$. If  $\mathsf L(ab)\neq \{2\}$, then \begin{align*}
    \min \big(\mathsf L(ab)\setminus\{2\} \big) \le \left\{
    \begin{aligned}
    &4, \text{ if $d_i=2$,}\\
    &3,  \text{ if $d_i\neq 2$.}
    \end{aligned}\right.
    \end{align*}

\item Suppose that $a=a_1a_2$ and $b=b_1b_2$ with $a_1, a_2, b_1, b_2 $ are atoms of $F$, that there exists  $i\in [1,n]$ such that $\mathsf p_i(a)=a_1$, $\mathsf p_i(b)=b_1$, and that $a_2=e$ or \big($a_2\in D_j$ and $b_2\not\in D_j$ where $j\in [1,n]\setminus\{i\}$\big).   Then $d_i\neq -1$. If $d_i=0$, then $\mathsf L(ab)=\{2\}$. If $\mathsf L(ab)\neq\{2\}$, then \begin{align*}
    \min \big(\mathsf L(ab)\setminus\{2\} \big) \le \left\{
    \begin{aligned}
    &4, \text{ if $d_i=2$,}\\
    &3,  \text{ if $d_i=1$.}
    \end{aligned}\right.
     \end{align*}

\item Suppose that $a=a_1a_2$ and $b=b_1b_2$ with $a_1, a_2,b_1, b_2 $ are atoms of $F$ and there exist distinct $i,j\in [1,n]$ such that $\mathsf p_i(a)=a_1$, $\mathsf p_j(a)=a_2$, $\mathsf p_i(b)=b_1$, and $\mathsf p_j(b)=b_2$. Then $d_i\neq -1$ and $d_j\neq -1$.  If $d_i+d_j=0$, then $\mathsf L(ab)=\{2\}$. If $\mathsf L(ab)\neq\{2\}$, then
    \begin{align*}
         \min \big(\mathsf L(ab)\setminus\{2\} \big) \le \left\{
         \begin{aligned}
         &6, \text{ if $d_i+d_j=4$,}\\
         &5,  \text{ if $d_i+d_j=3$,}\\
         &4,   \text{ if $d_i+d_j=2$,}\\
         &3,    \text{ if $d_i+d_j=1$.}\\
         \end{aligned}\right.
    \end{align*}
\end{enumerate}

\smallskip
\noindent
{\it Proof of {\bf A3.} } 1.
If $s_i=1$, we assume to the contrary that  $ab=x_1\cdot \ldots \cdot x_{\mu}$ with $\mu\ge 3$ and $x_k\in \mathcal A(B)$ for each $k\in [1,\mu]$. Then there exists $k\in [1,\mu]$, say $k=1$, such that $x_1\in D_i$ and hence $x_1=\epsilon a$ where $[\epsilon]=0$ by {\bf A2}.1. Therefore $\epsilon^{-1} b=x_2 \cdot \ldots \cdot x_{\mu}$ is also an atom of $B$, a contradiction.

 Suppose that $\mathsf L(ab)\neq\{2\}$ and $ab=x_1\cdot \ldots \cdot x_{\mu}$ with $\mu\ge 3$ and $x_k\in \mathcal A(B)$ for each $k\in [1,\mu]$. Let $b=b_1b_2$ with $b_1=\mathsf p_i(b)$.   Then there must exist $k\in [1,\mu]$, say $k=1$, such that $b_2\t x_1$. Therefore $abx_1^{-1}\in D_i$ and $abx_1^{-1}\not\in \mathcal A(B)$. Then  by {\bf A2}
$$\min \big(\mathsf L(ab)\setminus\{2\}\big) \le 1+\min \mathsf L(abx_1^{-1})\le\left\{\begin{aligned}
&4, \text{if $d_i=2$,}\\
&3, \text{ otherwise.}
\end{aligned}\right. $$
     \smallskip

2. By definition of $d_i$ and the existence of $a$, we have that $d_i\neq -1$. If $d_i=0$, then $\mathsf L(ab)=\{2\}$ is  obvious by definition. Suppose $\mathsf L(ab)\neq \{2\}$ and  $ab=a_1b_1\cdot a_2b_2=x_1\cdot \ldots \cdot x_{\mu}$ with $\mu\ge 3$ and $x_k\in \mathcal A(B)$ for each $k\in [1,\mu]$. Then there exist distinct $k,j\in [1,\mu]$, say $k=2, j=3$, such that $a_2\t x_2$ and $b_2\t x_3$ which implies that $x_1\t a_1b_1$. By our assumption,  $a_2b_2\in \mathcal A(B)$ and hence $a_1b_1\neq x_1$. Therefore $\min \big(\mathsf L(ab)\setminus\{2\}\big) \le \min \mathsf L(a_1b_1)+\min \mathsf L(a_2b_2)\le d_i+2$ by {\bf A2}.

\smallskip
3.  By definition of $d_i$ and the existence of $a$, we have that $d_i\neq -1$ and $d_j\neq -1$. If $d_i+d_j=0$, then $d_i=d_j=0$ and hence $\mathsf L(ab)=\{2\}$. Suppose that $\mathsf L(ab)\neq \{2\}$ and  $ab=x_1\cdot \ldots \cdot x_{\mu}$ with $\mu\ge 3$ and $x_k\in \mathcal A(B)$ for each $k\in [1,\mu]$.  If $a_1b_1, a_2b_2\in \mathcal A(B)$, then $x_k\nmid a_1b_1$ and $x_k\nmid a_2b_2$ for each $k\in [1,\mu]$.  Thus $a_1b_1=\mathsf p_i(x_1)\ldots \mathsf p_i(x_{\mu})$ with $\mathsf p_i(x_{\nu})\neq 1$ for each $\nu\in [1,\mu]$, a contradiction to $\mathsf D(G)=2$. Therefore $a_1b_1\not\in \mathcal A(B)$ or $a_2b_2\not\in \mathcal A(B)$. It follows that  $\min \big(\mathsf L(ab)\setminus\{2\} \big) \le \min \mathsf L(a_1b_1)+\min \mathsf L(a_2b_2)\le d_i+d_j+2$ by {\bf A2}.
 \qed(Proof of {\bf A3})

\medskip
Note, if  ($\mathsf p_i(U_1)=1$ or $\mathsf p_i(U_2)=1$) for every $i\in [1,n]$, then $\mathsf L(A)=\{2\}$. Therefore  $\mathsf L(A)\neq\{2\}$ implies that there exists $i_0\in [1,n]$ such that $\mathsf p_{i_0}(U_1)\neq 1$ and $\mathsf p_{i_0}(U_2)\neq 1$.

Now we distinguish five cases depending on the size of the right hand side of Equation \ref{equation2}.

\medskip
\noindent
{CASE 1 :} \, $\max \big(\{d_{\nu}+d_{\nu'} \mid \nu, \nu' \in [1,n] \text{ with }  \nu \neq \nu' \}\cup\{|d_{\nu}|\mid \nu \in [1,n]\}\big)=4$.

Then there exist distinct $\nu, \nu' \in [1,n]$ such that $d_{\nu}=d_{\nu'}=2$, say $d_1=d_2=2$. We define $U_1'=q_{1,1}q_{1,2}^2 q_{2,1}q_{2,2}^2$ and $U_2'=q_{1,1}^2q_{1,2} q_{2,1}^2q_{2,2}$. Then $U_1',U_2'\in \mathcal A(B)$ and $\mathsf L(U_1'U_2')=\{2,6\}$ which implies that $4\in \Delta(B)$.
By {\bf A3}, we know that $\min \Delta \big( \mathsf L (U_1U_2) \setminus \{2\} \big) \le 6$. Thus $\max \Delta(B)\le 4$  and hence $\max \Delta(B)=4$.

\medskip
\noindent
{CASE 2 :} \, $\max \big(\{d_{\nu}+d_{\nu'} \mid \nu, \nu' \in [1,n] \text{ with }  \nu \neq \nu' \}\cup\{|d_{\nu}|\mid \nu \in [1,n]\}\big)=3$.

Then there exist distinct $\nu, \nu' \in [1,n]$ such that $d_{\nu}=2$, $d_{\nu'}=1$, and $d_{\lambda} \le 1$ for each $\lambda \in [1,n]\setminus\{\nu, \nu'\}$, say $d_1=2$ and $d_2=1$.  Since $d_2=1$, we set
   \begin{align*}
  & a_1=\left\{\begin{aligned}
   &\epsilon q_{2,1}\cdot \ldots \cdot q_{2,s_2}&&\text{ with }[\epsilon]+[q_{2,1}\cdot \ldots \cdot q_{2,s_2}]=e,   \text{ if $G_2\neq \{0\}$}\\
   &q_{2,1}q_{2,2}\cdot \ldots \cdot q_{2,s_2},             &&\text{ if $G_2=\{0\}$, $s_2\ge 2$, and $[q_{2,1}]=e$, $[q_{2,j}]=0$ for each $j\in [2,s_2]$}\\
   &q_{2,1}q_{2,2}^{\delta}q_{2,3}\cdot \ldots \cdot q_{2,s_2}, &&\text{ with }\delta\in[1,2]\text{ such that }[q_{2,2}^{\delta}]=[q_{2,3}\cdot \ldots \cdot q_{2,s_2}], \\
   &&&  \qquad\qquad \qquad\qquad        \text{ if $G_2=\{0\}$, $s_2\ge 3$, and   $[q_{2,1}]=[q_{2,2}]=e$}\\
   \end{aligned}\right.\\
  & a_2=\left\{\begin{aligned}
      &a_1,  &&\text{ if $G_2\neq \{0\}$}\\
      &q_{2,1}^3q_{2,2}\cdot \ldots \cdot q_{2,s_2},            && \text{ if  $G_2=\{0\}$, $s_2\ge 2$, and $[q_{2,1}]=e$, $[q_{2,j}]=0$ for each $j\in [2,s_2]$}\\
        &q_{2,1}^3q_{2,2}^{\delta}q_{2,3}\cdot \ldots \cdot q_{2,s_2},&& \text{ with }\delta\in[1,2]\text{ such that }[q_{2,2}^{\delta}]=[q_{2,3}\cdot \ldots \cdot q_{2,s_2}], \\
           &&&  \qquad\qquad \qquad\qquad        \text{ if $G_2=\{0\}$, $s_2\ge 3$, and   $[q_{2,1}]=[q_{2,2}]=e$}\\
      \end{aligned}\right.
   \end{align*}
and define $U_1'=a_1\cdot q_{1,1}q_{1,2}^2$ and $U_2'=a_2\cdot q_{1,1}^2q_{1,2}$. Then $U_1',U_2'\in \mathcal A(B)$ and $\mathsf L(U_1'U_2')=\{2,5\}$ which implies that $3\in \Delta(B)$.
 By {\bf A3}, we know that $\min \Delta \big( \mathsf L (U_1U_2) \setminus \{2\} \big) \le 5$. Thus $\max \Delta(B)\le 3$  and hence $\max \Delta(B)=3$.

\medskip
\noindent {CASE 3 :} \, $\max \big(\{d_{\nu}+d_{\nu'} \mid \nu, \nu' \in [1,n] \text{ with }  \nu \neq \nu' \}\cup\{|d_{\nu}|\mid \nu \in [1,n]\}\big)=2$.

Then there exist distinct $\nu, \nu' \in [1,n]$ such that $d_{\nu}=d_{\nu'}=1$,
     and $d_{\lambda} \le 1$ for each $\lambda \in [1,n]\setminus\{\nu, \nu' \}$, or there exists  $\nu \in [1,n]$ such that $d_{\nu} =2$
            and $d_{\lambda} \le 0$ for each $\lambda \in [1,n] \setminus\{\nu \}$\,.

We start with the first case and, after renumbering if necessary, we suppose that $d_1=d_2=1$. We  set
      \begin{align*}
        & a_1=\left\{\begin{aligned}
         &\epsilon q_{1,1}\cdot \ldots \cdot q_{1,s_1}&&\text{ with }[\epsilon]+[q_{1,1}\cdot \ldots \cdot q_{1,s_1}]=e,   \text{ if $G_1\neq \{0\}$}\\
         &q_{1,1}q_{1,2}\cdot \ldots \cdot q_{1,s_1},             &&\text{ if $G_1=\{0\}$, $s_1\ge 1$, and $[q_{1,1}]=e$, $[q_{1,j}]=0$ for each $j\in [1,s_1]$}\\
         &q_{1,1}q_{1,2}^{\delta}q_{1,3}\cdot \ldots \cdot q_{1,s_1}, &&\text{ with }\delta\in[1,2]\text{ such that }[q_{1,2}^{\delta}]=[q_{1,3}\cdot \ldots \cdot q_{1,s_1}], \\
         &&&  \qquad\qquad \qquad\qquad        \text{ if $G_1=\{0\}$, $s_1\ge 3$, and   $[q_{1,1}]=[q_{1,2}]=e$}\\
         \end{aligned}\right.\\
        & a_2=\left\{\begin{aligned}
            &a_1,  &&\text{ if $G_1\neq \{0\}$}\\
            &q_{1,1}^3q_{1,2}\cdot \ldots \cdot q_{1,s_1},            && \text{ if  $G_1=\{0\}$, $s_1\ge 1$, and $[q_{1,1}]=e$, $[q_{1,j}]=0$ for each $j\in [1,s_1]$}\\
              &q_{1,1}^3q_{1,2}^{\delta}q_{1,3}\cdot \ldots \cdot q_{1,s_1},&& \text{ with }\delta\in[1,1]\text{ such that }[q_{1,2}^{\delta}]=[q_{1,3}\cdot \ldots \cdot q_{1,s_1}], \\
                 &&&  \qquad\qquad \qquad\qquad        \text{ if $G_1=\{0\}$, $s_1\ge 3$, and   $[q_{1,1}]=[q_{1,2}]=e$}\\
            \end{aligned}\right.\\
            \end{align*}
            \begin{align*}
             & b_1=\left\{\begin{aligned}
                     &\epsilon q_{2,1}\cdot \ldots \cdot q_{2,s_2}&&\text{ with }[\epsilon]+[q_{2,1}\cdot \ldots \cdot q_{2,s_2}]=e,   \text{ if $G_2\neq \{0\}$}\\
                     &q_{2,1}q_{2,2}\cdot \ldots \cdot q_{2,s_2},             &&\text{ if $G_2=\{0\}$, $s_2\ge 2$, and $[q_{2,1}]=e$, $[q_{2,j}]=0$ for each $j\in [2,s_2]$}\\
                     &q_{2,1}q_{2,2}^{\delta}q_{2,3}\cdot \ldots \cdot q_{2,s_2}, &&\text{ with }\delta\in[1,2]\text{ such that }[q_{2,2}^{\delta}]=[q_{2,3}\cdot \ldots \cdot q_{2,s_2}], \\
                     &&&  \qquad\qquad \qquad\qquad        \text{ if $G_2=\{0\}$, $s_2\ge 3$, and   $[q_{2,1}]=[q_{2,2}]=e$}\\
                     \end{aligned}\right.\\
                    & b_2=\left\{\begin{aligned}
                        &a_1,  &&\text{ if $G_2\neq \{0\}$}\\
                        &q_{2,1}^3q_{2,2}\cdot \ldots \cdot q_{2,s_2},            && \text{ if  $G_2=\{0\}$, $s_2\ge 2$, and $[q_{2,1}]=e$, $[q_{2,j}]=0$ for each $j\in [2,s_2]$}\\
                          &q_{2,1}^3q_{2,2}^{\delta}q_{2,3}\cdot \ldots \cdot q_{2,s_2},&& \text{ with }\delta\in[1,2]\text{ such that }[q_{2,2}^{\delta}]=[q_{2,3}\cdot \ldots \cdot q_{2,s_2}], \\
                             &&&  \qquad\qquad \qquad\qquad        \text{ if $G_2=\{0\}$, $s_2\ge 3$, and   $[q_{2,1}]=[q_{2,2}]=e$}\\
                        \end{aligned}\right.
         \end{align*}
and define       $U_1'=a_1b_1$ and $U_2'=a_2b_2$. Then $U_1',U_2' \in \mathcal A (B)$ and $\mathsf L(U_1'U_2')=\{2,4\}$ which implies that $2\in \Delta(B)$.
   \medskip

Now we consider the second case and suppose that there exists  $\nu \in [1,n]$ such that $d_{\nu} =2$ and $d_{\lambda} \le 0$ for each $\lambda \in [1,n]\setminus\{\nu \}$, say $\nu=1$.
We define $U_1'=e\cdot q_{1,1}q_{1,2}^2$ and $U_2'=e\cdot q_{1,1}^2q_{1,2}$. Then $U_1',U_2'\in \mathcal A(B)$ and $\mathsf L(U_1'U_2')=\{2,4\}$ which implies that $2\in \Delta(B)$.

Therefore in both cases, we have that $2\in \Delta(B)$.
 By {\bf A3}, we know that $\min \Delta \big( \mathsf L (U_1U_2) \setminus \{2\} \big) \le 4$. Thus $\max \Delta(B)\le 2$  and hence $\max \Delta(B)=2$.

\medskip
\noindent {CASE 4 : } \, $\max \big(\{d_{\nu}+d_{\nu'} \mid \nu, \nu' \in [1,n] \text{ with }  \nu \neq \nu' \}\cup\{|d_{\nu}|\mid \nu \in [1,n]\}\big)=1$.

Then there exists $\nu \in [1,n]$ such that $d_{\nu} =1$  and $d_{\lambda} \le 0$ for each $\lambda \in [1,n]\setminus\{\nu \}$, or there exists  $\nu \in [1,n]$ such that $d_{\nu} =-1$ and $d_{\lambda} =0$ for each $\lambda \in [1,n]\setminus\{\nu \}$\,.

We start with the first case and, after renumbering if necessary, we    suppose that $d_1=1$.  We set
          \begin{align*}
                  & a_1=\left\{\begin{aligned}
                   &\epsilon q_{1,1}\cdot \ldots \cdot q_{1,s_1}&&\text{ with }[\epsilon]+[q_{1,1}\cdot \ldots \cdot q_{1,s_1}]=e,   \text{ if $G_1\neq \{0\}$}\\
                   &q_{1,1}q_{1,2}\cdot \ldots \cdot q_{1,s_1},             &&\text{ if $G_1=\{0\}$, $s_1\ge 1$, and $[q_{1,1}]=e$, $[q_{1,j}]=0$ for each $j\in [1,s_1]$}\\
                   &q_{1,1}q_{1,2}^{\delta}q_{1,3}\cdot \ldots \cdot q_{1,s_1}, &&\text{ with }\delta\in[1,2]\text{ such that }[q_{1,2}^{\delta}]=[q_{1,3}\cdot \ldots \cdot q_{1,s_1}], \\
                   &&&  \qquad\qquad \qquad\qquad        \text{ if $G_1=\{0\}$, $s_1\ge 3$, and   $[q_{1,1}]=[q_{1,2}]=e$}\\
                   \end{aligned}\right.\\
                  & a_2=\left\{\begin{aligned}
                      &a_1,  &&\text{ if $G_1\neq \{0\}$}\\
                      &q_{1,1}^3q_{1,2}\cdot \ldots \cdot q_{1,s_1},            && \text{ if  $G_1=\{0\}$, $s_1\ge 1$, and $[q_{1,1}]=e$, $[q_{1,j}]=0$ for each $j\in [1,s_1]$}\\
                        &q_{1,1}^3q_{1,2}^{\delta}q_{1,3}\cdot \ldots \cdot q_{1,s_1},&& \text{ with }\delta\in[1,1]\text{ such that }[q_{1,2}^{\delta}]=[q_{1,3}\cdot \ldots \cdot q_{1,s_1}], \\
                           &&&  \qquad\qquad \qquad\qquad        \text{ if $G_1=\{0\}$, $s_1\ge 3$, and   $[q_{1,1}]=[q_{1,2}]=e$}\\
                      \end{aligned}\right.\\
                   \end{align*}
and define $U_1'=e\cdot a_1$ and $U_2'=e\cdot a_2$. Then $U_1', U_2' \in \mathcal A (B)$ and $\mathsf L(U_1'U_2')=\{2,3\}$ which implies that $1\in \Delta(B)$.
   \medskip

Now we consider the second case and suppose there exists  $\nu \in [1,n]$ such that $d_{\nu} =-1$ and $d_{\lambda}=0$ for each $\lambda \in [1,n]\setminus\{\nu\}$, say $\nu=1$.
We define $U_1'=q_{1,1}^2q_{1,2}\cdot \ldots \cdot q_{1,s_1}$ and $U_2'=q_{1,1}q_{1,2}^2\cdot \ldots \cdot q_{1,s_1}^2$. Then $U_1', U_2' \in \mathcal A (B)$ and  $\mathsf L(U_1'U_2')=\{2,3\}$ which implies that $1\in \Delta(B)$.

Therefore in both cases, we have that $1\in \Delta(B)$.  By {\bf A3}, we know that $\min \Delta \big( \mathsf L (U_1U_2) \setminus \{2\} \big) \le 3$. Thus $\max \Delta(B)\le 1$  and hence $\max \Delta(B)=1$.

\medskip
\noindent
{CASE 5 :} \, $\max \big(\{d_{\nu}+d_{\nu'} \mid \nu, \nu' \in [1,n] \text{ with }  \nu \neq \nu' \}\cup\{|d_{\nu}|\mid \nu \in [1,n]\}\big)=0$.

Then $d_{\nu}=0$ for each $\nu \in[1,n]$.  We have $U_1=\mathsf p_{i_0}(U_1)=\epsilon p_{i_0,1}^{\delta}=\epsilon'U_2=\epsilon' \mathsf p_{i_0}(U_2)$, where $\delta\in [1,2]$ and $\epsilon,\epsilon'\in \widehat{D_i}^{\times}$, which implies that $\mathsf L(U_1U_2)=\{2\}$, a contradiction.
\end{proof}

\medskip
We provide a list of $v$-noetherian weakly Krull monoids having nontrivial conductor and finite $v$-class group. However, they are either not seminormal or they miss the assumption on the prime ideals in the classes, and the statements of Theorem \ref{1.1} fail (i.e.,  $\min \Delta (H)>1$ or $\Delta (H)$ is not an interval).

\medskip
\begin{examples} \label{3.4}~

\medskip
1. (Krull monoids) By definition, every Krull monoid is a seminormal $v$-noetherian weakly Krull monoid. Let $H$ be a Krull monoid with class group $G$ and let $G_P \subset G$ denote the set of classes containing minimal prime ideals. If $G_P=G$, then $\Delta (H)$ is an interval (Proposition \ref{2.3}). Suppose that $G_P \ne G$. Then, in general, the set of distances $\Delta (H)$ need not be an interval. There is an abundance of natural examples, and all these phenomena already occur in Dedekind domains (see \cite[Remark 3.1]{Ge-Yu12b} and \cite[Theorem 3.7.8]{Ge-HK06a}).

\medskip
2. (Weakly factorial monoids) A monoid is  weakly factorial if every non-unit is a finite product of primary elements (equivalently, if it is weakly Krull with trivial $t$-class group, see \cite[Exercise 22.5]{HK98}). In particular, primary monoids are weakly factorial.  To recall the connection between ring theoretical and monoid theoretical concepts, let $R$ be a domain. Then its multiplicative monoid $R^{\bullet}$ is primary if and only if $R$ is one-dimensional and local and, if $R$ is a one-dimensional local Mori domain with $(R \colon \widehat R) \ne \{0\}$, then $R^{\bullet}$ is finitely primary; furthermore, $R^{\bullet}$ is seminormal finitely primary  if and only if $R$ is a seminormal one-dimensional local Mori domain (\cite[Proposition 2.10.7]{Ge-HK06a} and \cite[Lemma 3.4]{Ge-Ka-Re15a}).

 The following examples are $v$-noetherian weakly Krull monoids with nontrivial conductor and trivial $v$-class group. However, they fail to be seminormal and their sets of distances are not intervals.

\smallskip
2.(a) (Numerical monoids) Numerical monoids are finitely generated (and hence $v$-noetherian) finitely primary monoids of rank one, and hence they are weakly Krull with nontrivial conductor and trivial $v$-class group. Let $H$ be a numerical monoid. Then, in general, we have $2 + \max \Delta (H) < \mathsf c (H)$ (see, for example,  \cite[Example 3.1.6]{Ge-HK06a}).  Sets of distances of numerical monoids (and in particular, gaps in their sets of distances) have found wide interest in the literature. To mention an explicit example, if $H = \langle n, n+1, n^2-n-1\rangle$ with $n \ge 3$, then $\Delta (H) = [1, n-2] \cup \{2n-5\}$ by \cite[Proposition 4.9]{B-C-K-R06}. Furthermore, each set of the form $\{ d, td\}$ with $d, t \in \N$ occurs as a set of distances of a numerical monoid (\cite{Co-Ka15a}).

\smallskip
2.(b) (Finitely primary monoids of higher rank) For each $d \in \N$ there is a $v$-noetherian finitely primary monoid of rank two  with $\min \Delta (H)=d$ (\cite[Example 3.1.9]{Ge-HK06a}).

\medskip
3. (Seminormal $v$-noetherian weakly Krull monoids with nontrivial conductor) Consider the seminormal $v$-noetherian finitely primary monoid
\[
D = \{p_1^{k_1}p_2^{k_2} \mid k_1, k_2 \in \N \} \cup \{1\} \subset \widehat D = \mathcal F ( \{p_1, p_2\}) \,,
\]
a finite cyclic group $G$ of order $|G|=n\ge 3$, and an element $e \in G$ with $\ord (e)=n$. We define a homomorphism $\iota \colon D \to G$ by setting $\iota (p_1)=e$ and $\iota (p_2)=-e$.

\smallskip
3.(a) Then $H = \Ker (\iota) \hookrightarrow D$ is a cofinal saturated  submonoid (\cite[Proposition 2.5.1]{Ge-HK06a}) and it is a seminormal $v$-noetherian weakly Krull monoid with $(H \colon \widehat H) \ne \emptyset$ by \cite[Lemma 5.2]{Ge-Ka-Re15a}. We assert that $\min \Delta (H)=n$.

Clearly, $H = \{ p_1^{k_1}p_2^{k_2} \mid k_1, k_2 \in \N, k_1 \equiv k_2 \mod n \} \cup \{1\}$ and
\[
\mathcal A (H) = \{p_1 p_2^{k_2} \mid k_2 \in 1+ n\N_0 \} \cup \{ p_1^{k_1} p_2 \mid k_1 \in 1+n\N_0 \} \,.
\]
Thus, if $u_1 \cdot \ldots \cdot u_k =v_1 \cdot \ldots \cdot v_{\ell}$, where $k, \ell \in \N$ and $u_1, \ldots, u_k, v_1, \ldots, v_{\ell} \in \mathcal A (H)$, then $k \equiv \ell \mod n$ and hence $n$ divides $\gcd \Delta (H) = \min \Delta (H)$. To show that $n \in \Delta (H)$, consider the element
$a = p_1^{n+2} p_2^{n+2} \in H$. Clearly,  $\{p_1p_2, p_1p_2^{n+1}, p_1^{n+1}p_2\}$ is the set of atoms of $H$ dividing $a$, $a = (p_1p_2^{n+1})(p_1^{n+1}p_2)= (p_1p_2)^{n+2}$, and hence $\mathsf L_H (a) = \{2, n+2\}$.

\smallskip
3.(b) If the above monoid occurs as the primary component of a $T$-block monoid, then the situation is different. To show this, let us consider the monoid
\[
B = \mathcal B (G, T, \iota) \subset \mathcal F (G) \times D \,,
\]
where $G, \iota$, and $D$ are as at the beginning of 3. Then $B$  satisfies all assumptions of Theorem \ref{1.1}, $\mathcal B (G) \subset B$  and $H \subset B$ are divisor-closed submonoids. We assert that $\Delta (B) = [1,n]$. Since $\Delta  (G)= [1,n-2]$ by Propositions \ref{2.3} and \ref{2.4}, it follows that $[1, n-2] \subset \Delta (B)$.  Furthermore, $a = p_1^{n+2} p_2^{n+2} \in H \subset B$, $\mathsf L_H (a) = \mathsf L_B (a)= \{2, n+2\}$, and hence $n \in \Delta (B)$.  The element $b = p_1^{n+1}p_2^{n+2}e \in B$, $\{p_1p_2, p_1p_2^{n+1}, ep_1p_2^2, ep_1^n p_2\}$ is the set of atoms of $B$ dividing $b$, $b = (p_1p_2^{n+1})(ep_1^np_2)=(p_1p_2)^n (ep_1p_2^2)$, $\mathsf L_B (b) = \{2, n+1\}$, and hence $n-1 \in \Delta (B)$. It can be checked that $\max \Delta (B) \le n$, and then the assertion follows.

\medskip
4. We provide an example of a weakly Krull monoid $B$  with $v$-class group $G$ satisfying all assumptions of  Theorem \ref{1.1} where
\[
\max \Delta (B)  > \mathsf D (G)-2 > \max \Delta (G)
\]
(confer the bounds given in Proposition \ref{2.4}). Since $\mathsf D (G)-2 > \max \Delta (G)$, Proposition \ref{2.4}.2 implies that $G$ can  neither cyclic nor be an elementary $2$-group. We set $G = C_3^r$ with $r \ge 2$,  choose a basis $(e_1, \ldots, e_r)$ of $G$ with $\ord (e_1)= \ldots = \ord (e_r)=3$, and set $e_0=e_1+\ldots+e_r$. Then $\mathsf D (G) = \mathsf D^* (G)=2r+1$. For $i \in [0,r]$, we define a seminormal $v$-noetherian finitely primary monoid
\[
D_i = \{p_i^{k_i}q_i^{l_i} \mid k_i, l_i \in \N \} \cup \{1\} \subset \widehat{ D_i} = \mathcal F ( \{p_i, q_i\}) \,,
\]
and we define a homomorphism $\iota \colon T=D_0 \times \ldots \times D_r \to G$ by $\iota (p_i)=e_i$, $\iota (q_i)=-e_i$ for every $i \in [0,r]$. Then $B=\mathcal B (G,T,\iota)$ is a seminormal $v$-noetherian weakly Krull monoid with nontrivial conductor and $v$-class group isomorphic to $G$ (see Proposition \ref{2.2}). The elements
\[
U_1=p_1q_1^{2} \cdot \ldots \cdot  p_r q_r^{2} \cdot p_0^{2}q_0 \,, \  U_2=p_1^{2}q_1 \cdot \ldots \cdot p_r^{2}q_r \cdot p_0 q_0^{2} \,, \quad \text{and} \quad V_i = p_iq_i \quad \text{for every} \ i \in [0,r]
\]
are atoms of $B$, $U_1U_2= (V_0 \cdot \ldots \cdot V_r)^3$, and $\mathsf L_B (U_1U_2)=\{2,3(r+1) \}$.
\end{examples}


\providecommand{\bysame}{\leavevmode\hbox to3em{\hrulefill}\thinspace}
\providecommand{\MR}{\relax\ifhmode\unskip\space\fi MR }
\providecommand{\MRhref}[2]{%
  \href{http://www.ams.org/mathscinet-getitem?mr=#1}{#2}
}
\providecommand{\href}[2]{#2}

\end{document}